\documentclass[11pt]{amsart}
\usepackage[T1]{fontenc}
\usepackage{amsmath,amssymb,amsthm,mathtools}
\usepackage{booktabs,array}
\usepackage{microtype}
\usepackage[hidelinks]{hyperref}
\theoremstyle{plain}
\newtheorem{theorem}{Theorem}[section]
\newtheorem{proposition}[theorem]{Proposition}
\newtheorem{lemma}[theorem]{Lemma}
\newtheorem{corollary}[theorem]{Corollary}
\theoremstyle{definition}
\newtheorem{definition}[theorem]{Definition}

\numberwithin{equation}{section}

\DeclareMathOperator{\Nm}{N}

\DeclareMathOperator{\ord}{ord}
\newcommand{\Z}{\mathbb Z}
\newcommand{\Q}{\mathbb Q}
\newcommand{\F}{\mathbb F}
\newcommand{\GG}{G}
\newcommand{\pp}{\mathfrak p}
\newcommand{\qq}{\mathfrak q}
\newcommand{\dd}{\mathrm d}
\newcommand{\ee}{\mathrm e}
\title[Integer group determinants for $C_7\times C_7$]
{Integer group determinants for the elementary abelian group of order~49}
\author{Chatchawan Panraksa}
\address{Applied Mathematics Program, Mahidol University International College,
Salaya, Nakhon Pathom 73170, Thailand}
\email{chatchawan.pan@mahidol.ac.th}
\date{}
\hypersetup{pdftitle={Integer group determinants for the elementary abelian group of order 49},pdfauthor={Chatchawan Panraksa}}
\begin{document}
\begin{abstract}
Let $G=C_7\times C_7$, and let $S(G)$ denote the set of integer
values of its group determinant. We determine $S(G)$ by classifying
the values divisible by $7$; the coprime values are already known.
Every nonzero divisible value has valuation at least $10$, and every
multiple of $7^{12}$ occurs. At valuations $10$ and $11$, we give
necessary and sufficient conditions on the cofactor in terms of two
additive invariants of ideals in $\mathbb Z[\zeta_7]$. The first
condition is a bounded signed sum of prime-ideal invariants. The
second requires a prime ideal with nonzero invariant pair whose norm
divides the cofactor. Neither cofactor set is a union of congruence
classes modulo any positive integer. The least positive divisible
value is $43\cdot7^{10}$, and the least positive value of valuation
$11$ is $8\cdot7^{11}$. The proof combines integral reconstruction
from character values with a calculation of the global-unit image
modulo $7$. We conclude by identifying the additional local
conditions and realization problems that arise at primes at least $11$.
\end{abstract}
\keywords{Group determinant, Taussky--Todd problem, cyclotomic norm,
unit group, elementary abelian group}
\subjclass[2020]{Primary 11C20; Secondary 11R18, 15B36}
\maketitle
\section{Introduction}\label{sec:intro}

The integer group determinant problem asks which integers occur as
determinants of the matrices $(v_{gh^{-1}})_{g,h\in G}$ attached to a
finite group $G$. Write
\[
 \Theta_G(v)=\det(v_{gh^{-1}})_{g,h\in G},\qquad
 S(G)=\{\Theta_G(v):v\in\Z^{|G|}\}.
\]
This is the Taussky--Todd problem. For abelian groups, character
factorization expresses $\Theta_G$ as a product of cyclotomic norms.
Complete answers are
known for all groups of order less than $20$
\cite{PinnerSmyth2020,PaudelPinner2025Order18} and for both groups
of order $25$ \cite{MossinghoffPinner2022,Panraksa2026}.
We determine the integer values for $G=C_7\times C_7$.

For an odd prime $p$, the values for $C_p\times C_p$ coprime to
$p$ are characterized by $m^{p-1}\equiv1\pmod{p^2}$.
Trafford~\cite[Theorem~12]{Trafford1992} records the general
elementary abelian criterion, attributing it to Odoni~\cite{Odoni1983};
see also DeSilva and Pinner~\cite[Lemmas~2.1--2.2]{DeSilvaPinner2014}.
The lower bound $p+3$ for the valuation of a nonzero divisible
value, and the occurrence of every valuation at least $p+3$,
appear in Trafford~\cite[Lemma~13]{Trafford1992}; see also
Mossinghoff and Pinner~\cite[Theorem~2.1]{MossinghoffPinnerHeisenberg}.
For $p=3,5$, every cofactor at the lowest valuation occurs, giving
the classifications of Pinner and Smyth
\cite[Theorem~6.1]{PinnerSmyth2020} and of Panraksa
\cite[Corollary~1.2]{Panraksa2026}:
\begin{align*}
 S(C_3^2)&=\{m\equiv\pm1\pmod9\}\cup3^6\Z,\\
 S(C_5^2)&=\{m\equiv\pm1,\pm7\pmod{25}\}\cup5^8\Z.
\end{align*}
At $p=7$, further restrictions appear at the first two valuations,
$10$ and $11$. They depend on the prime factors of the cofactor,
as do the restrictions in Mossinghoff and Pinner's prime-power
circulant classifications~\cite{MossinghoffPinner2022}.
Our criteria distinguish which prime factors can occur in a
cyclotomic norm and how their contributions can be combined.

Two additive invariants of ideals describe these contributions. Put
$K=\Q(\omega)$, $\omega=\zeta_7$, $R=\Z[\omega]$ and
$\pi=1-\omega$. In
$A=R/7R\simeq\F_7[\pi]/(\pi^6)$, put
\[
 \Lambda=\log\omega=-\sum_{j=1}^5\frac{\pi^j}{j}.
\]
For $z\in A^\times$, with constant residue $a=z\bmod\pi$, define
\begin{equation}\label{eq:intro-defects}
 \log(z/a)=\sum_{j=1}^5c_j(z)\Lambda^j,
 \qquad \dd(z)=4c_3(z),\quad\ee(z)=c_5(z).
\end{equation}
The logarithm is the finite polynomial
$\sum_{j=1}^5(-1)^{j+1}X^j/j$ applied to $X=z/a-1$.
Since $K$ has class number one, every ideal coprime to $7$ has a
generator. The invariants defined in \eqref{eq:intro-defects} are unchanged
by multiplying a generator by a unit, and hence define
$\dd(\mathfrak a),\ee(\mathfrak a)$ for such ideals; these facts
are proved below.

For a rational prime $\ell\ne7$, let $f_\ell=\ord_7(\ell)$ and
choose a prime ideal $\qq\mid\ell$. Its norm is $\ell^{f_\ell}$.
Let $\delta(\ell)\in\{0,1,2,3\}$ represent the pair
$\{\pm\dd(\qq)\}$, and call $\ell$ \emph{invisible} if
$(\dd(\qq),\ee(\qq))=(0,0)$, and \emph{visible} otherwise.
Both definitions are independent of
the chosen prime above $\ell$. A prime divisor $\ell$ of
$m=\pm\prod_{\ell\ne7}\ell^{a_\ell}$ is \emph{usable} if
$a_\ell\ge f_\ell$; this means that the norm of a prime ideal above
$\ell$ divides $|m|$. Finally, put
\[
 A_7=\{m\in\Z:m^6\equiv1\pmod{49}\}
     =\{m\equiv\pm1,\pm18,\pm19\pmod{49}\}.
\]

\begin{theorem}\label{thm:main}
For $G=C_7\times C_7$,
\[
 S(G)=A_7\cup7^{10}L_{10}\cup7^{11}L_{11}\cup7^{12}\Z,
\]
where $L_{10}$ and $L_{11}$ contain only nonzero integers coprime
to $7$. For $m=\pm\prod_{\ell\ne7}\ell^{a_\ell}$:
\begin{enumerate}
\item[(i)] $m\in L_{10}$ if and only if integers $j_\ell$ exist with
$|j_\ell|\le\lfloor a_\ell/f_\ell\rfloor$ and
\[
 \sum_\ell j_\ell\delta(\ell)\equiv\pm1\pmod7.
\]
\item[(ii)] $m\in L_{11}$ if and only if a usable prime divisor of
$m$ is not invisible.
\end{enumerate}
Both cofactor sets are symmetric about zero, and
$L_{10}\subsetneq L_{11}$.
\end{theorem}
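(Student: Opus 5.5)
We plan to work throughout with the character factorization. For $v\in\Z^{49}$ let $F(x,y)\in\Z[x,y]$ be the associated polynomial modulo $(x^7-1,y^7-1)$. Then
\[
\Theta_G(v)=F(1,1)\prod_{H}\Nm_{K/\Q}(\alpha_H),
\]
where $H$ runs over the eight subgroups of index $7$ (the eight lines in $\F_7^2$) and $\alpha_H\in R$ is the value of $F$ on a generator of the corresponding character line. Conversely, a tuple $(n,\alpha_1,\dots,\alpha_8)\in\Z\times R^8$ arises from some integral $F$ exactly when the inverse Fourier transform is integral. This integrality is a system of congruences linking $n$ and the $\alpha_H$ modulo powers of $\pi$, of the kind used in the Trafford and DeSilva--Pinner arguments: roughly, $\alpha_H\equiv n\pmod{\pi}$, together with a compatibility condition that $\sum_H(\alpha_H-n)$ be divisible by $49$ in a suitable sense.

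\emph{Step 1.} We first dispose of the coprime part and the extreme valuations. The coprime values are $A_7$ by the Odoni criterion quoted above, and the known bound gives $\nu_7\ge10$ for nonzero divisible values. For a divisible value, every $\alpha_H$ and $n$ are divisible by $\pi$. Writing $\alpha_H=\pi^{k_H}u_H$ and $n=7^{t}n'$, we count valuations; the minimum $10=1+8+1$ corresponds to $7\| n$ and exactly one $\alpha_H$ of valuation $2$ with the rest of valuation $1$ (or an equivalent distribution). The occurrence of all multiples of $7^{12}$ should follow from explicit families: taking $F=(x-1)(y-1)\,g+\dots$ with one free factor lets us multiply by an arbitrary integer. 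We expect this to be a routine extension of Trafford's Lemma 13 construction.

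\emph{Step 2.} This is the central step: at valuations $10$ and $11$ we reduce integrality to a condition in $A^\times$. Dividing each $\alpha_H$ by its $\pi$-power leaves units $z_H\in A^\times$, and the reconstruction congruences become linear conditions on $\log(z_H/a_H)$, that is, on the coefficients $c_j(z_H)$. The claim to establish is that exactly the coordinates $c_3$ and $c_5$ remain constrained: at valuation $10$ a single linear relation of the form $\sum\pm\dd(z_H)\equiv\pm1$ is forced, and at valuation $11$ one of the pairs $(\dd,\ee)$ must be nonzero. The global-unit computation then enters. The cyclotomic units of $R$ have logs spanning the even-index directions together with $\Lambda$, and modulo $7$ they do not touch $c_3,c_5$ beyond sign. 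This is exactly why $\dd,\ee$ are well defined on ideals, and why the freedom to change generators by units cannot alter the constraint. Complex conjugation acts by $\Lambda\mapsto-\Lambda$, which accounts for the sign ambiguity in $\delta(\ell)$.

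\emph{Step 3.} Next we translate to cofactors. Given $m$, the elements $\alpha_H/\pi^{k_H}$ must have norms multiplying to $m$ (up to $n'$). Their ideal factorizations therefore distribute prime ideals $\qq$ over $\ell$ across the eight characters, and additivity of $\dd$ gives sums $\sum j_\ell\delta(\ell)$. Using a prime and its Galois conjugates, with $\sigma$ acting on $\dd$ by multiplication by $\sigma$-powers, we get the coefficients $j_\ell$ with $|j_\ell|\le\lfloor a_\ell/f_\ell\rfloor$, since each usable copy of $\ell^{f_\ell}$ contributes one prime ideal. This gives necessity in (i). Sufficiency comes from choosing generators and placing the leftover norm in $n'$ or in the other $\alpha_H$ with zero invariants. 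For (ii) the same procedure applies, except that only nonvanishing of the pair matters, so one visible usable prime suffices. Symmetry in sign comes from $v\mapsto -v$ on an odd-size group, and $L_{10}\subseteq L_{11}$ is immediate since a solution to (i) needs some $\delta(\ell)\ne0$. Strictness is witnessed by a prime with $\dd=0\ne\ee$, or by $8$, which we would check numerically.

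\emph{Main obstacle.} We expect the hardest part to be Step 2: deriving precisely which combinations of $c_j$ the integrality congruences constrain, and verifying by explicit computation of cyclotomic-unit logarithms modulo $7$ that the unit image misses exactly the $c_3,c_5$ directions. We would first attempt this by computing $\log$ of $(1-\omega^k)/(1-\omega)$, $k=2,3$, in $A$ directly.
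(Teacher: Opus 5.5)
The overall architecture of your outline is right: character factorization, the profiles $(1^7,2)$ and $(1^7,3)$, defects that are well defined on ideals, and additivity over prime ideals. But Step~2 carries the whole content of (i) and (ii), and it is asserted rather than derived; the mechanism you propose for it is also wrong.

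The integrality condition is not a compatibility condition on $\sum_H(\alpha_H-n)$. It is the pointwise congruence $c^{(\infty)}(j)+\sum_b c^{(b)}(i+bj)\equiv s\pmod 7$ for all $(i,j)\in\F_7^2$. Here $c^{(r)}\in\Z^7$ is the ray vector of direction $r$, namely the coefficients of $f_r$ in $1,\omega,\dots,\omega^6$, normalized to sum $s$. Write each ray residue as a polynomial $\sum_d\gamma_d m^d$ over $\F_7$. The congruence then splits by degree into moment conditions across the seven generic directions $b$, and each special coordinate $\gamma_{\infty,d}$ equals minus a moment. These conditions are affine-linear in the \emph{additive} coordinates $\gamma_d$ of $f_r$ modulo $7$. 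Multiplication by units, by contrast, is linear only in the \emph{logarithmic} coordinates $c_j$, and the dictionary between the two is polynomial; for instance $\dd(z)=\gamma_2+\gamma_4^3+2\gamma_3\gamma_4$ at leading residue $1$. So the reconstruction congruences do not become linear conditions on the $c_j(z_H)$.

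This nonlinearity is exactly where the target $\pm1$ comes from, and nothing in your plan produces it. At level ten, the degree-four condition makes the generic $\gamma_4(b)$ a polynomial of degree at most two in $b$, whose $b^2$-coefficient $q_2$ equals $\gamma_{\infty,4}$. The condition $q_2\ne0$ says precisely that the special valuation is two. Substituting $\gamma_2(b)=d_b-\gamma_4(b)^3-2\gamma_3(b)\gamma_4(b)$ into the degree-two condition gives $q_2^3+D_0=0$, with $D_0=\sum_b\dd(\mathfrak a_b)$ over the generic directions. Thus $D_0$ must be minus a nonzero cube, and $\{\pm1\}$ is the set of nonzero cubes in $\F_7$. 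A linear outcome such as $D_0\ne0$ would wrongly put $8$ into $L_{10}$, since $\delta(2)=3$.

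At level eleven you must also separate the profile $(1^7,2)$ with $v_7(s)=2$ from $(1^7,3)$ with $v_7(s)=1$, and explain why $\ee$ matters only in the latter. In $(1^7,3)$ the degree-two conditions force $D_0=D_1=0$. The invariant $\ee$ then enters only through the constant condition $\sum_r\gamma_0(\bar c^{(r)})=1-s/7$ in $\F_7$ (local ray vectors normalized to sum seven). This is a genuine constraint because the special constant coordinate is no longer free. It yields the three-branch criterion in $D_2$, $D_3+d_\infty$, $E_0$ and $s/7$, and realizing its third branch requires choosing a conjugate prime whose $\ee$ equals $-s/7$.

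Sufficiency throughout needs every ray residue with prescribed defects to be attained by some unit multiple. That uses the exact image $\F_7^\times\exp\langle\Lambda,\Lambda^2,\Lambda^4\rangle$ of all of $R^\times$: Kronecker's theorem applied to $u/\bar u$ gives the inclusion, and $\omega$, $1+\omega$, $1+\omega+\omega^2$ give surjectivity.

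Finally, $7^{12}\Z$ is not a routine extension of Trafford's construction. Here, unlike $p=3,5$, the lowest valuations do not give full ideals, since $\pm7^{10},\pm7^{11}\notin S(G)$. You need a vector with all eight cofactors units, profile $(1^7,3)$ and $49\mid s$, followed by shifts by $\mathbf 1_G$. Its existence is the all-trivial case of the valuation-three criterion, and $(x-1)(y-1)g+\cdots$ does not supply it.
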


For example, $\delta(2)=3$, $\delta(29)=2$ and $\delta(43)=1$.
Thus $8,29^2\in L_{11}\setminus L_{10}$, while $43,64\in L_{10}$.
The prime $379$ has $\dd=0$ and $\ee\ne0$, so it belongs to
$L_{11}\setminus L_{10}$ as well. Explicit prime generators are
given in Lemma~\ref{lem:small-prime-types}.

\begin{corollary}\label{cor:minima}
The least positive $7$-divisible determinant value is
$43\cdot7^{10}$. The least positive value of valuation exactly
$11$ is $8\cdot7^{11}$. In particular,
$\pm7^{10},\pm7^{11}\notin S(G)$, whereas $7^{12}\Z\subseteq S(G)$.
\end{corollary}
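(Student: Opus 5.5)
We deduce Corollary~\ref{cor:minima} directly from Theorem~\ref{thm:main}, using the prime invariants recorded after it and in Lemma~\ref{lem:small-prime-types}. By Theorem~\ref{thm:main}, a positive $7$-divisible value in $S(G)$ has the form $7^{10}m$ with $m\in L_{10}$, $7^{11}m$ with $m\in L_{11}$, or a multiple of $7^{12}$. The last kind is at least $7^{12}=49\cdot7^{10}$, which exceeds $43\cdot7^{10}$. Similarly, $7^{11}m\ge 7^{11}$ exceeds $43\cdot 7^{10}$, since $7^{11}=49\cdot 7^{10}$ and $m\ge1$. So the first claim reduces to two facts: $43\in L_{10}$, and no positive $m<43$ lies in $L_{10}$. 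The second claim reduces to $8\in L_{11}$ together with $m\notin L_{11}$ for $1\le m\le7$, where $m$ is coprime to $7$.

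For membership, we use the stated values. Since $\ord_7(43)=1$ and $\delta(43)=1$, taking $j_{43}=1$ gives $\sum j_\ell\delta(\ell)\equiv1$, so $43\in L_{10}$. For $8$, we have $f_2=\ord_7(2)=3$ and $a_2=3$, so $2$ is usable. Because $\delta(2)=3\ne0$, the prime $2$ is visible, and (ii) gives $8\in L_{11}$.

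For exclusion, we use the observation that a prime $\ell$ contributes to (i) or (ii) only if $\lfloor a_\ell/f_\ell\rfloor\ge1$, that is, only if $\ell^{f_\ell}\mid m$. Thus for $m<43$ we must list the usable prime powers $\ell^{f_\ell}\le 42$ with $\ell\ne7$. The prime $2$ has $f=3$, giving $8$. The prime $3$ has $f=6$, giving $729$, and $5$ also has $f=6$. Primes $\ell\equiv 6\pmod 7$ have $f=2$, and the first is $13$, with $13^2>42$. Primes $\ell\equiv 2,4\pmod 7$ have $f=3$, and the smallest such, $11$, has $11^3$ far too large. Primes $\ell\equiv1\pmod7$ have $f=1$: these are $29$ and $43$, and only $29$ is at most $42$. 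So for $m\le42$ the only possible usable primes are $2$ (requiring $8\mid m$) and $29$ (requiring $m=29$).

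We now check (i) for these candidates. Since $\delta(29)=2$ and $m=29$ allows only $|j_{29}|\le1$, the sum is $\pm2$ or $0$, never $\pm1$. For $8\mid m$ with $m\le 40$, we have $a_2\le5<6$, so $|j_2|\le1$. Then the sum is $0$ or $\pm3$, never $\pm1\pmod 7$. No multiple of $8$ up to $40$ has a second usable prime, since $8\cdot29>42$. If $m$ has no usable prime, the sum is $0$. Hence no positive $m\le42$ lies in $L_{10}$. For (ii), every $m\le7$ coprime to $7$ has no usable prime, so $m\notin L_{11}$.

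Finally, the negative values follow from symmetry. Since $1\notin L_{10}$ and $1\notin L_{11}$, and both sets are symmetric about zero, $\pm7^{10},\pm7^{11}\notin S(G)$. The containment $7^{12}\Z\subseteq S(G)$ is read off directly from Theorem~\ref{thm:main}. The only delicate step is the enumeration of usable prime powers below $43$; it relies only on computing $\ord_7$ of small primes, so no step is a real obstacle once the stated values of $\delta$ are granted.
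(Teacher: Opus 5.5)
Your route is the paper's: split $S(G)$ into the strata of Theorem~\ref{thm:main}, find the prime-ideal norms below $43$ (only $8$ and $29$), and test criteria (i) and (ii) using $\delta(2)=3$, $\delta(29)=2$, $\delta(43)=1$. Your membership checks ($43\in L_{10}$, $8\in L_{11}$), your exclusions ($m\le 42$ for $L_{10}$, $m\le 7$ for $L_{11}$), and the handling of $\pm7^{10},\pm7^{11}$ and $7^{12}\Z$ are all correct.

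One step, however, is false as written. You claim $7^{11}=49\cdot7^{10}$, but in fact $7^{11}=7\cdot7^{10}$. So $7^{11}m\ge 7^{11}$ does \emph{not} exceed $43\cdot7^{10}$: for $1\le m\le 6$ one has $7^{11}m<43\cdot7^{10}$. Hence the first claim does not reduce to the two $L_{10}$ facts alone. You also need that $L_{11}$ contains no positive $m\le 6$.

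You prove exactly this in your final paragraph, and in fact you show the least positive element of $L_{11}$ is $8$. The repair is to invoke it: $7^{11}m\ge 8\cdot 7^{11}=56\cdot 7^{10}>43\cdot7^{10}$. This is how the paper concludes, via the chain $43\cdot7^{10}<7^{12}<8\cdot7^{11}$.

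A smaller point: your enumeration names only $3$ and $5$ among primes with $f_\ell=6$. It should also cover the rest of that class, such as $17,19,31$. The cleanest way is to note that $f_\ell\ge2$ forces $\ell^{f_\ell}\ge\ell^2>42$ for $\ell\ge 11$. Then the only usable norms at most $42$ are $2^3$ and the degree-one prime $29$.
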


\begin{corollary}\label{cor:sensitivity}
Neither $L_{10}$ nor $L_{11}$ is a union of congruence classes
modulo any positive integer.
\end{corollary}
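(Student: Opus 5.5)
The plan is to argue by contradiction. Suppose $L$ (either $L_{10}$ or $L_{11}$) is a union of residue classes modulo some $N\ge1$. I will produce two primes $\ell,q\nmid 7N$ with $\ell\equiv q\pmod N$, such that $\ell\in L_{10}\subseteq L_{11}$ while $q\notin L_{11}\supseteq L_{10}$. This one pair then handles both sets at once. By Theorem~\ref{thm:main}, it suffices to take:
\begin{itemize}
\item $\ell\equiv1\pmod7$, so $f_\ell=1$ and $\ell$ is usable in $m=\ell$, with $\delta(\ell)=\pm1$. Then $j_\ell=\pm1$ gives $\ell\in L_{10}$.
\item $q\equiv1\pmod7$ and $q$ invisible. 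Then $\delta(q)=0$, so the sum in (i) is $0\not\equiv\pm1$, giving $q\notin L_{10}$. Also the only usable prime of $q$ is invisible, giving $q\notin L_{11}$.
\end{itemize}

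The key step is to show that, for every $N$, there are infinitely many split primes of each type in a prescribed residue class modulo $N$. I would do this by a Chebotarev argument. For a split prime $p=\qq\bar{\ } \cdots$ with generator $\varpi$ of $\qq$, the pair $(\dd(\qq),\ee(\qq))$ depends only on the class of $\varpi$ in $A^\times$ modulo $\F_7^\times\cdot\omega^{\Z}\cdot(\text{image of }R^\times)$. This is the unit-image computation proved earlier in the paper. Hence visibility and the value of $\delta$ are determined by the Artin symbol of $\qq$ in the ray class field $H$ of $K$ of conductor $7R$, which is abelian over $\Q$. The condition ``$p\equiv a\pmod N$'' is a Frobenius condition in $\Q(\zeta_N)$. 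So I apply Chebotarev in the compositum $H\cdot\Q(\zeta_N)$, whose Galois group over $\Q$ is abelian.

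What has to be checked is compatibility. We need an element of this Galois group that is trivial on $K$ (so the prime splits), has prescribed restriction to $\Q(\zeta_N)$, and has prescribed invariant class in $\mathrm{Gal}(H/K)$. Write $N=7^bM$ with $7\nmid M$. The field $\Q(\zeta_M)$ is linearly disjoint from $H$, since $H$ is ramified only at $7$. Hence the $M$-part can be chosen freely, and I choose the same residue $a\bmod M$ for both $\ell$ and $q$.

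For the $7^b$-part, $H\cap\Q(\zeta_{7^b})$ lies in the maximal abelian-over-$\Q$ subfield of $H$ ramified only at $7$. I would show that $\dd$ and $\ee$ are insensitive to rational scalars $r\in\Z$, $7\nmid r$. This uses $c_3,c_5$ of $r$: since $\log$ of a constant in $1+7\Z$ vanishes in $A$, rational integers contribute only to the constant residue. Consequently the Frobenius restricted to $\Q(\zeta_{7^b})\cap H$ is independent of the visibility class. Concretely, for any fixed residue $p\bmod 7^b$ with $p\equiv1\pmod7$, both an invisible class and a class with $\delta=\pm1$ remain available. The examples $43$ (with $\delta=1$) and any invisible prime, whose existence follows from nontriviality of the map to $(\dd,\ee)$, show that both classes are nonempty.

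Given this, Chebotarev yields primes $\ell,q\nmid N$ in a common class modulo $N$, with $\ell$ visible with $\delta(\ell)=\pm1$ and $q$ invisible. This contradicts the assumption on $L$.

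The main obstacle is the compatibility step at the prime $7$. One must verify that the subgroup of $\mathrm{Gal}(H/K)$ cut out by the invariants $(\dd,\ee)$ is independent of the subgroup detected by restriction to cyclotomic fields $\Q(\zeta_{7^b})$. Equivalently, twisting a generator by an element of $1+7\Z_7$, or by the norm-type classes coming from $\Q$, does not change $c_3$ or $c_5$. I would first verify this directly from \eqref{eq:intro-defects}. If it fails, I would fall back on explicit pairs: numerically exhibit, for the class constraint modulo $49$, one invisible and one $\delta=\pm1$ prime in the same class modulo $49$, and then lift only the prime-to-$7$ part via Dirichlet and Chebotarev.
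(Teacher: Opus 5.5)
Your argument is correct in outline. It differs from the paper's proof, which is entirely elementary, while yours runs through class field theory and Chebotarev.

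\textbf{A claim to fix.} The ray class field $H$ of modulus $7R$ is \emph{not} abelian over $\Q$, and neither is $H\cdot\Q(\zeta_N)$. By Theorem~\ref{thm:de-props}(c), conjugation by a lift of $\sigma_t$ acts on $\mathrm{Gal}(H/K)\cong\F_7^2$ by $(\dd,\ee)\mapsto(t^3\dd,t^{-1}\ee)$, which is nontrivial. Independently, Kronecker--Weber would place an abelian $H$ inside some $\Q(\zeta_{7^k})$, whose Galois group over $K$ is cyclic, not $\F_7^2$.

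Nothing breaks, because Chebotarev does not need abelianness. The same observation also removes your ``main obstacle'' in one line. For $t=3$ the action is $\mathrm{diag}(-1,5)$, so $\sigma_3-1$ is invertible and the coinvariants vanish. Hence every field between $K$ and $H$ that is abelian over $\Q$ equals $K$. In particular $H\cap K(\zeta_N)=K$, so $\mathrm{Gal}(HK(\zeta_N)/K)\cong\mathrm{Gal}(H/K)\times\mathrm{Gal}(K(\zeta_N)/K)$ for every $N$. Your rational-scalar idea reaches the same conclusion: the ideals $rR$ have zero defects and norms $r^6$, and these norms fill $(1+7\Z)/(1+7^b\Z)$.

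\textbf{A simplification.} You do not need the invisible prime $q$: the integer $1\notin L_{11}$ already serves as the non-member.

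\textbf{The paper's route.} It uses CRT in $R$ to choose $\alpha\equiv z\pmod{7R}$ with $\dd(z)\ne0$ and $\alpha\equiv1\pmod{N_0R}$. Additivity then gives a prime $\qq\mid(\alpha)$ with $\dd(\qq)\ne0$, lying over some $\ell\nmid7N$ that need not split. Taking $a$ a multiple of $\varphi(N)$ with $a\ge3f_\ell$ gives $\ell^a\equiv1\pmod N$ with $\ell^a\in L_{10}$ by Proposition~\ref{prop:blocks}, while $1\notin L_{11}$.

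\textbf{Comparison.}
\begin{itemize}
\item Your route gives prime witnesses: positive-density sets of both members and non-members in every admissible class modulo $N$. Its cost is Artin reciprocity, including the identification of $\mathrm{Frob}_\qq$ with the class of a generator in $A^\times/\overline{R^\times}$, plus Chebotarev.
\item The paper's route needs only CRT, Euler's theorem and the bounded-count criterion. Its price is a prime-power witness whose prime may have $f_\ell>1$.
\end{itemize}
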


The proof has two parts. Integral reconstruction determines which
collections of character values come from integer coefficient vectors.
The image of the global units in $A^\times$ then determines which
residues can occur for a fixed cofactor ideal. Together these give
necessary and sufficient conditions at valuations $10$ and $11$,
which we translate into the prime-factor criteria above.
The Chinese remainder theorem gives integers in the same residue
class with different cofactor membership, proving
Corollary~\ref{cor:sensitivity}.
The final section explains which parts of the argument require new
ideas when $p\ge11$.


\section{Character values and local conditions}\label{sec:prelim}

We first identify the possible character valuations at the two
exceptional determinant valuations, then give the local conditions
needed to construct them.
Write $\pp=\pi R$. Then $7R=\pp^6$ and $R/\pp=\F_7$.
The conjugate embeddings $\sigma_t(\omega)=\omega^t$ pair off, so
nonzero elements of $R$ have positive norm and units have norm one.
Also $h_K=1$: the discriminant has absolute value $7^5$, giving
Minkowski bound $<5$, while the prime ideals above $2$ and $3$ have
norms $8$ and $729$. Thus $R$ is the only integral ideal below the
bound. The discriminant and splitting formulas are given in
Neukirch~\cite[pp.~59--61]{Neukirch1999}; for further cyclotomic
background, see Washington~\cite[Chapter~2, pp.~9--19]{Washington1997}.
We use $v_7(7)=v_\pp(\pi)=1$ and $v(0)=\infty$.

Label $\GG=\F_7^2$ by $(i,j)$ and its characters by
$\chi_{a,b}(i,j)=\omega^{ai+bj}$. Put $s(v)=\sum v_{i,j}$ and
$f_{a,b}(v)=\sum v_{i,j}\omega^{ai+bj}$. The $48$ nontrivial
characters form eight Galois orbits indexed by $\mathbb P^1(\F_7)$,
with representatives $(0,1)$ and $(1,b)$, $b\in\F_7$. We call these
directions $\infty$ and $b$, respectively. Character factorization gives
\begin{equation}\label{eq:factorization}
 \Theta_\GG(v)=s(v)\prod_rN_r(v),\qquad
 N_r(v)=\Nm_{K/\Q}(f_r(v))\in\Z_{\ge0}.
\end{equation}

\begin{lemma}\label{lem:chi-mod-p}\label{lem:val-norm}
For every character $\chi$ and nonzero $\alpha\in R$,
\[
 \chi(v)\equiv s(v)\pmod\pp,\qquad
 v_7(\Nm_{K/\Q}\alpha)=v_\pp(\alpha).
\]
\end{lemma}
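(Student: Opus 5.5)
The first congruence reduces modulo $\pp=\pi R$; the second is a valuation count over the Galois conjugates.

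\emph{The congruence.} Since $\pi=1-\omega\in\pp$, we have $\omega\equiv1\pmod\pp$, and therefore $\omega^k\equiv1\pmod\pp$ for every integer $k$. A character $\chi=\chi_{a,b}$ evaluates to
\[
 \chi(v)=\sum v_{i,j}\omega^{ai+bj}.
\]
Reducing each term modulo $\pp$ gives $\chi(v)\equiv\sum v_{i,j}=s(v)\pmod\pp$. The trivial character needs no argument, since there $\chi(v)=s(v)$ exactly.

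\emph{The valuation identity.} Write $\Nm_{K/\Q}\alpha=\prod_{t\in(\Z/7)^\times}\sigma_t(\alpha)$, which lies in $\Z$. The key fact is that $\pp$ is the unique prime above $7$ and is Galois-stable. Concretely, $\sigma_t(\pi)=1-\omega^t=\pi\cdot u_t$ with $u_t=1+\omega+\dots+\omega^{t-1}$. Here $u_t$ is a unit: its inverse is $(1-\omega)/(1-\omega^t)$, which lies in $R$ by the same identity applied with $s=t^{-1}\bmod 7$, since $\omega=(\omega^t)^s$. Hence $\sigma_t(\pp)=\pp$, and so $v_\pp(\sigma_t\alpha)=v_{\sigma_t^{-1}\pp}(\alpha)=v_\pp(\alpha)$. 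Summing over the six embeddings gives
\[
 v_\pp(\Nm\alpha)=6\,v_\pp(\alpha).
\]
Finally, for a rational integer $n$ we have $v_\pp(n)=6v_7(n)$ because $7R=\pp^6$. Dividing by $6$ yields $v_7(\Nm\alpha)=v_\pp(\alpha)$.

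Nothing here is a real obstacle. The only step needing care is the Galois-invariance of $\pp$, and the explicit unit $u_t$ settles it directly.
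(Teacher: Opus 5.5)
Your proof is correct and follows the paper's own argument: the congruence comes from $\omega\equiv1\pmod\pp$, and the valuation identity comes from $v_\pp(\Nm\alpha)=6v_\pp(\alpha)$ together with $7R=\pp^6$. The only addition is that your explicit unit $u_t=(1-\omega^t)/(1-\omega)$ proves the Galois-stability of $\pp$, a fact the paper uses without comment.
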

\begin{proof}
The congruence follows from $\omega\equiv1\pmod\pp$.
For the valuation identity, use
$v_\pp(\Nm\alpha)=6v_\pp(\alpha)$ and $v_\pp(7)=6$.
\end{proof}
Consequently, for nonzero determinants,
\begin{equation}\label{eq:v7-formula}
 v_7(\Theta_\GG(v))=v_7(s(v))+\sum_rv_\pp(f_r(v)).
\end{equation}

For $\ell_\infty(i,j)=j$ and $\ell_b(i,j)=i+bj$, define the
\emph{ray vector} $c^{(r)}\in\Z^7$ by
$c^{(r)}(m)=\sum_{\ell_r(i,j)=m}v_{i,j}$. Each has sum $s(v)$, and
\begin{equation}\label{eq:ray}
 f_r(v)=\sum_{m\in\F_7}c^{(r)}(m)\omega^m.
\end{equation}
More generally, $f\in R$ and a prescribed sum $s\equiv f\pmod\pp$
determine a unique ray vector: coefficient vectors for $f$ differ
by multiples of $(1,\ldots,1)$, and the sum fixes that multiple.
Its reduction $\bar c\in\F_7^7$ is the \emph{ray residue} of $(f,s)$.
Write $\bar c(m)=\sum_{d=0}^6\gamma_dm^d$ for its unique polynomial
representation, with $0^0=1$.

Fixing the sum also fixes the constant coordinate $\gamma_0$;
the next lemma records the precision this requires.
\begin{lemma}\label{lem:cbar-window}
Write $f=\sum_{i=0}^5a_i\omega^i$ and $\ell(f)=\sum_i a_i$.
The ray vector of $(f,s)$ is
\[
 c=(a_0,\ldots,a_5,0)+t(1,\ldots,1),\qquad
 t=(s-\ell(f))/7\in\Z.
\]
Thus $\bar c$ depends only on $(f\bmod7R,t\bmod7)$, hence on
$(f\bmod49R,s\bmod49)$; its nonconstant coordinates depend only on
$f\bmod7R$. With $s$ fixed, replacing $f$ by $f'=f+7g$ gives
$\bar c'=\bar c-\overline{\ell(g)}(1,\ldots,1)$. In particular,
$f'-f\in7\pp$ implies $\ell(f')\equiv\ell(f)\pmod{49}$.
\end{lemma}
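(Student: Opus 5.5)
The plan is a direct computation with the basis $1,\omega,\ldots,\omega^5$ of $R$, using the relation $\omega^6=-\sum_{i=0}^5\omega^i$. Two facts carry it: the ray vector is determined by $(f,s)$, and reduction modulo $7$ is linear.

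\emph{Step 1: the formula for $c$.} Let $c$ be the ray vector of $(f,s)$. Then $\sum_m c(m)\omega^m=f$, and since $\sum_{m=0}^6\omega^m=0$ we also have $\sum_m a'_m\omega^m=f$ for $a'=(a_0,\ldots,a_5,0)$. The powers $\omega^0,\ldots,\omega^6$ satisfy only the single relation $\sum\omega^m=0$ over $\Z$. Hence $c-a'=t(1,\ldots,1)$ for some $t\in\Z$. Taking coordinate sums gives $s=\ell(f)+7t$, so $t=(s-\ell(f))/7$. This is an integer, because $s\equiv f\pmod\pp$ and $f\equiv\ell(f)\pmod\pp$ (as $\omega\equiv1$), so $s-\ell(f)\in\pp\cap\Z=7\Z$.

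\emph{Step 2: dependence of $\bar c$.} Reducing modulo $7$ gives $\bar c=\bar a'+\bar t(1,\ldots,1)$. The coordinates $a_i\bmod7$ are exactly the data of $f\bmod7R$, because $1,\omega,\ldots,\omega^5$ is a $\Z$-basis of $R$. Therefore $\bar c$ depends only on $(f\bmod7R,\,t\bmod7)$. Moreover, $t\bmod7$ is determined by $(s-\ell(f))\bmod49$, and $\ell(f)\bmod49$ is determined by $f\bmod49R$. So $\bar c$ depends only on $(f\bmod49R,\,s\bmod49)$.

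\emph{Step 3: the nonconstant coordinates.} Adding $\bar t(1,\ldots,1)$ changes only $\gamma_0$ in the polynomial representation $\bar c(m)=\sum_d\gamma_dm^d$, since the constant function $1$ corresponds to $d=0$. The coefficients $\gamma_1,\ldots,\gamma_6$ are linear functionals of $\bar c$ that vanish on constants. Hence they depend only on $\bar a'$, that is, only on $f\bmod7R$.

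\emph{Step 4: the effect of $f\mapsto f+7g$.} Replacing $f$ by $f'=f+7g$ gives $a'_i=a_i+7g_i$ and $\ell(f')=\ell(f)+7\ell(g)$. With $s$ fixed, $t'=t-\ell(g)$. Modulo $7$, $\bar a'$ is unchanged, so $\bar c'=\bar c-\overline{\ell(g)}(1,\ldots,1)$.

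\emph{Step 5: the final claim.} If $f'-f\in7\pp$, then $f'-f=7g$ with $g\in\pp$. Since $\ell$ is additive and $\ell(\omega^i)=1$, the map $\ell\colon R\to\Z$ is a ring homomorphism reducing to $R\to R/\pp=\F_7$, namely $\omega\mapsto1$. Hence $g\in\pp$ implies $7\mid\ell(g)$, and so $\ell(f')-\ell(f)=7\ell(g)\equiv0\pmod{49}$.

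The only point needing care is this last step, and it reduces to the identification $\ell\equiv(\,\cdot\bmod\pp)$, which is immediate since $\omega\equiv1\pmod\pp$.
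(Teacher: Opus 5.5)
Your proof is correct and follows the paper's argument essentially step for step: the ray vector is unique up to multiples of $(1,\ldots,1)$, $t$ is integral because $\pp\cap\Z=7\Z$, the precision claims follow by reduction, and replacing $f$ by $f+7g$ shifts $t$ by $-\ell(g)$. One small correction: $\ell\colon R\to\Z$ is not a ring homomorphism (for example, $\ell(\omega^6)=-6$), and only its reduction modulo $7$ is one; this is harmless, since Step~5 needs only $\ell(g)\equiv g\pmod\pp$, which you already noted in Step~1.
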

\begin{proof}
The displayed vector evaluates to $f$ and has sum $s$.
The coefficient $t$ is integral since
$\ell(f)\equiv f\equiv s\pmod\pp$ and $\pp\cap\Z=7\Z$.
Reduction gives the precision assertions. Replacing $f$ by $f+7g$
changes $t$ by $-\ell(g)$ and the other coefficients by multiples
of seven, hence changes only $\gamma_0$. If $g\in\pp$, then
$\ell(g)\equiv0\pmod7$, proving the last assertion.
\end{proof}

Expanding $\omega^m=(1-\pi)^m$ gives
\begin{equation}\label{eq:digits}
 f=\sum_{j=0}^6(-1)^j\mathcal M_j(c)\pi^j,\qquad
 \mathcal M_j(c)=\sum_{m=0}^6\binom mj c(m)\in\Z.
\end{equation}
The power sums on $\F_7$ give the moment dictionary
\begin{equation}\label{eq:moment-dict}
 \mathcal M_0\equiv-\gamma_6,\qquad
 \mathcal M_1\equiv-\gamma_5,\qquad
 \mathcal M_2\equiv\tfrac12(\gamma_5-\gamma_4)\pmod7.
\end{equation}

\begin{lemma}\label{lem:digits}
For $0\le k\le6$, $v_\pp(f)\ge k$ if and only if
$7\mid\mathcal M_j(c)$ for every $j<k$.
\end{lemma}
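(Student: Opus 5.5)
Proof proposal. We argue by induction on $k$, using the expansion \eqref{eq:digits}
\[
 f=\sum_{j=0}^6(-1)^j\mathcal M_j(c)\pi^j
\]
together with $7R=\pp^6$. Each coefficient $\mathcal M_j(c)$ is a rational integer. Hence $v_\pp(\mathcal M_j(c)\pi^j)=j$ when $7\nmid\mathcal M_j(c)$, and $v_\pp(\mathcal M_j(c)\pi^j)\ge 6+j$ when $7\mid\mathcal M_j(c)$. This is because $v_\pp$ restricted to $\Z$ equals $6v_7$.

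For the ``if'' direction, suppose $7\mid\mathcal M_j(c)$ for all $j<k$. Then every term with $j<k$ has valuation at least $6\ge k$, since $k\le6$. Every term with $j\ge k$ has valuation at least $j\ge k$. So $v_\pp(f)\ge k$.

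For the converse, suppose $v_\pp(f)\ge k$ but some $\mathcal M_j(c)$ with $j<k$ is not divisible by $7$, and let $j_0$ be the least such index. The terms with $j<j_0$ have valuation at least $6>j_0$, because $j_0\le k-1\le5$. The term $j_0$ has valuation exactly $j_0$. The terms with $j>j_0$ have valuation greater than $j_0$. By the strict ultrametric inequality, $v_\pp(f)=j_0<k$, a contradiction.

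The only point needing care is the bound $k\le6$. It guarantees that the multiples of $7$ in the low terms, which have valuation at least $6$, never interfere with the valuation threshold. This is why the lemma is stated only for $0\le k\le6$. The case $k=0$ is vacuous on both sides. No step is a genuine obstacle: the argument is a direct digit expansion in the uniformizer $\pi$, with $\{0,\dots,6\}$ playing the role of a residue system for $R/\pp=\F_7$.
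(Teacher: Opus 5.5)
Your proof is correct and follows the paper's argument: the ``if'' direction places every term of \eqref{eq:digits} in $\pp^k$, and the converse isolates the least index $j_0<k$ with $7\nmid\mathcal M_{j_0}(c)$ as the unique term of minimal valuation, so $v_\pp(f)=j_0<k$. Your opening mention of induction on $k$ is unnecessary, since the argument you actually give is direct.
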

\begin{proof}
Under these divisibilities, every term of the expansion
\eqref{eq:digits} lies in $\pp^k$.
Otherwise the least $j<k$ with $7\nmid\mathcal M_j$ supplies
the unique term of valuation $j$: earlier terms have valuation at
least six, and later terms have larger valuation. Hence $v_\pp(f)=j$.
\end{proof}
Thus valuations at least $1,2,3$ correspond, respectively, to the
vanishing of $\gamma_6$, of $\gamma_6,\gamma_5$, and of
$\gamma_6,\gamma_5,\gamma_4$. For a valuation-one ray put
$\lambda=\mathcal M_1\bmod7=-\gamma_5$, its \emph{leading residue};
then $f\equiv-\lambda\pi\pmod{\pp^2}$.

There are $49$ character factors, so $\Theta_\GG(-v)=-\Theta_\GG(v)$.
Nontrivial characters annihilate $\mathbf1_\GG$, giving
\begin{equation}\label{eq:shift}
 \Theta_\GG(v+c\mathbf1_\GG)
   =(s(v)+49c)\prod_rN_r(v)\qquad(c\in\Z).
\end{equation}

\begin{lemma}[Normalization]\label{lem:normalize}
\begin{enumerate}
\item[(i)] A change of group coordinates by $\varphi\in\mathrm{GL}_2(\F_7)$
preserves the determinant and augmentation, and permutes the eight
orbit valuations. Each chosen direction can be moved to $\infty$.
\item[(ii)] The scalar change $\varphi_\alpha(i,j)=(\alpha i,\alpha j)$
fixes each orbit and sends $f_{a,b}$ to $\sigma_\alpha(f_{a,b})$,
multiplying its leading residue by $\alpha$. Thus a common nonzero
leading residue may be normalized to one.
\end{enumerate}
\end{lemma}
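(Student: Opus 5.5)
The plan is to compute directly how a change of coordinates acts on the coefficient vector and on the characters. For $\varphi\in\mathrm{GL}_2(\F_7)$, put $v^\varphi_{g}=v_{\varphi^{-1}(g)}$.

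\emph{Determinant and augmentation.} The matrix $(v^\varphi_{gh^{-1}})$ is obtained from $(v_{gh^{-1}})$ by the simultaneous permutation $g\mapsto\varphi^{-1}(g)$ of rows and columns. This uses only that $\varphi$ is a group automorphism, so $\varphi^{-1}(g)\varphi^{-1}(h)^{-1}=\varphi^{-1}(gh^{-1})$. A simultaneous permutation of rows and columns is conjugation by a permutation matrix, so $\Theta_\GG(v^\varphi)=\Theta_\GG(v)$. The augmentation $s(v)$ is also unchanged, because reindexing does not change the sum.

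\emph{Permutation of orbits.} For characters we have
\[
f_\chi(v^\varphi)=\sum_g v_{\varphi^{-1}g}\chi(g)=f_{\chi\circ\varphi}(v).
\]
The transpose action $\chi\mapsto\chi\circ\varphi$ permutes the nontrivial characters. It commutes with the Galois action $\chi\mapsto\chi^t$, so it permutes the eight Galois orbits. Because $v_\pp$ is constant on a Galois orbit, the orbit valuations are permuted, and the factorization \eqref{eq:factorization} is merely reordered. The directions are the lines of $\F_7^2$ (the points of $\mathbb P^1(\F_7)$), and $\mathrm{GL}_2(\F_7)$ acts transitively on them. Hence some $\varphi$ sends any chosen direction to $\infty$. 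The only point needing care is to state this transitivity on the dual side, i.e.\ for the transpose action.

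\emph{The scalar case.} For $\varphi_\alpha$ the identity above reads
\[
f_{a,b}(v^{\varphi_\alpha})=f_{\alpha a,\alpha b}(v)=\sigma_\alpha(f_{a,b}(v)),
\]
possibly with $\alpha^{-1}$ in place of $\alpha$ depending on the convention for $v^\varphi$; one then replaces $\alpha$ by $\alpha^{-1}$. Each line is fixed, so each orbit is fixed. For the leading residue, note that $\sigma_\alpha(\pi)=1-\omega^\alpha$, and
\[
1-\omega^\alpha=1-(1-\pi)^\alpha\equiv\alpha\pi\pmod{\pp^2}.
\]
Moreover $\sigma_\alpha$ fixes $\pp$ and acts trivially on $R/\pp=\F_7$. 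Therefore
\[
f\equiv-\lambda\pi\pmod{\pp^2}\quad\text{implies}\quad\sigma_\alpha f\equiv-\lambda\alpha\pi\pmod{\pp^2},
\]
so the leading residue is multiplied by $\alpha$.

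\emph{Normalizing a common residue.} If all the relevant rays share the nonzero leading residue $\lambda$, take $\alpha=\lambda^{-1}$. All of them then acquire leading residue $1$ simultaneously, while the determinant and $s$ are preserved.

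I expect no real obstacle. The only delicate step is fixing the convention so that the correct Galois automorphism, $\sigma_\alpha$ rather than $\sigma_{\alpha^{-1}}$, appears.
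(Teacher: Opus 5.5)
Your proof is correct and follows the paper's argument. Part (i) is the same permutation-of-characters reasoning in more detail, and part (ii) rests on the same identity $f_{a,b}(v\circ\varphi_\alpha^{-1})=\sigma_\alpha(f_{a,b}(v))$; your own convention $v^\varphi_g=v_{\varphi^{-1}(g)}$ already gives $\sigma_\alpha$ exactly, so the hedge about $\alpha^{-1}$ is unnecessary. The only difference is the leading-residue step: you apply $\sigma_\alpha(\pi)\equiv\alpha\pi\pmod{\pp^2}$ to $f\equiv-\lambda\pi\pmod{\pp^2}$, whereas the paper uses the ray permutation $\bar c(m)\mapsto\bar c(\alpha^{-1}m)$, so that $\gamma_5\mapsto\alpha^{-5}\gamma_5=\alpha\gamma_5$, and both computations are valid.
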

\begin{proof}
A change of coordinates permutes the character factors and acts
transitively on their projective directions; Galois twists preserve
valuations. For a scalar change,
$f_{a,b}(v\circ\varphi_\alpha^{-1})=f_{\alpha a,\alpha b}(v)
=\sigma_\alpha(f_{a,b}(v))$. Its ray permutation is
$\bar c(m)\mapsto\bar c(\alpha^{-1}m)$, so
$\lambda$ changes to $\alpha^{-5}\lambda=\alpha\lambda$.
\end{proof}

\begin{theorem}[Known coprime criterion]\label{thm:coprime}
The determinant values coprime to $7$ are precisely
\[
 S_1(\GG)=A_7=\{m\in\Z:m^6\equiv1\pmod{49}\}.
\]
\end{theorem}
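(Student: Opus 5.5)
We prove the two inclusions separately.

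\emph{Necessity.} Suppose $\Theta_\GG(v)=m$ with $7\nmid m$. By \eqref{eq:v7-formula} every $f_r(v)$ is then a $\pp$-unit and $7\nmid s(v)$. By Lemma~\ref{lem:chi-mod-p}, $f_r\equiv s\pmod\pp$, and by Lemma~\ref{lem:digits} all moments $\mathcal M_j$ with $j\ge1$ are unconstrained. So a finer congruence is needed. We will show $N_r(v)\equiv s^6\pmod{49}$ for each direction $r$. Write $f=s+\sum_{m}c(m)(\omega^m-1)$ using the ray vector \eqref{eq:ray}. Then $\Nm(f)=\prod_t\sigma_t(f)$, and modulo $7$ we have $\Nm(f)\equiv f^6$, since each $\sigma_t$ acts trivially on $R/\pp$ and $R/7R$ is local. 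For the level-$49$ statement, the standard route is the elementary-abelian identity $\sum_{r}f_r(v)=7\,v_{0,0}\cdot 7-s$ type relations. More concretely, the point is that $\Nm(f)=\Nm(s+7\text{-part}+\pi\text{-part})$, and the trace of the $\pi$-part contributes a multiple of $7$ with an extra factor.

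I would instead use the cleaner argument of Odoni/Trafford. Consider the product over all $49$ characters, $\prod_\chi\chi(v)$. Modulo $49$ it agrees with $s^{49}\equiv s^{7}$, because $\prod_\chi\chi(v)\equiv\bigl(\sum_g v_g\bigr)^{49}$ via the Frobenius-type congruence $\prod_{\chi}\chi(v)\equiv\Nm$ of the group-ring element. Hence $m\equiv s^7\pmod{49}$, and therefore $m^6\equiv s^{42}\equiv1\pmod{49}$, since $s^{42}=(s^6)^7\equiv1\pmod{49}$ by Euler's theorem ($\phi(49)=42$). The key input is the congruence $\Theta_\GG(v)\equiv s(v)^{|G|}\pmod{p^2}$ for $p$-groups when $p\nmid s$. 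I would prove it by noting that $\Theta_\GG(v)=\Nm_{\Z[G]/\Z}(\sum v_gg)$ and that $\Z[G]$ is a free $\Z$-module on which the Frobenius lift of $x^7$ acts. Since this is the known criterion, I would simply cite Trafford~\cite[Theorem~12]{Trafford1992} and DeSilva--Pinner~\cite[Lemmas~2.1--2.2]{DeSilvaPinner2014} for this step.

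\emph{Sufficiency.} By \eqref{eq:shift} with $v=e_{0,0}$ (all characters equal $1$), we get $\Theta_\GG(e_{0,0}+c\mathbf1_\GG)=1+49c$, so every $m\equiv1\pmod{49}$ occurs. Negation gives $m\equiv-1$. For $\pm18$ and $\pm19$, it suffices to realize one value in each of the classes $18$ and $19$ modulo $49$, and then shift by multiples of $49$ using \eqref{eq:shift}. Here I would take $v$ supported on a line, for instance $v=e_{0,0}+e_{0,1}$. Then the characters with $b\ne0$ on that line give $\Nm(1+\omega)=1$, while those trivial on it give $2$. The result is $\Theta=2^{7}\cdot1=128\equiv30$; I would then search small supports (e.g.\ $2e_{0,0}+e_{0,1}$, giving $3^7\cdot\Nm(2+\omega)^7$) until the classes $18$ and $19$ are hit. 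Since these generate the group of sixth roots of unity modulo $49$ together with $-1$, multiplicativity of $\Theta$ under convolution lets us reach all six classes.

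The main obstacle is the necessity congruence modulo $49$ rather than $7$; this is where I rely on the cited literature.
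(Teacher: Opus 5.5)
The sufficiency half has a genuine gap. By \eqref{eq:shift}, $\Theta_\GG(v+c\mathbf1_\GG)=(s(v)+49c)\prod_rN_r(v)$. So a seed $v$ yields only the progression $\Theta_\GG(v)+49\prod_rN_r(v)\,\Z$, not the whole class of $\Theta_\GG(v)$ modulo $49$. Your seeds have $\prod_rN_r>1$. For $e_{0,0}+e_{0,1}$ the product is $\Nm(2)=64$, so shifting gives only $64(2+49c)$. For $2e_{0,0}+e_{0,1}$ it is $3^6\cdot43^7$. Multiplicativity does not repair this, because products of realized values lie in progressions of modulus $49\prod_rN_r(v)\prod_rN_r(w)$.

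What you need is a seed all of whose $48$ nontrivial character values are global units, and this cannot be avoided. Each $N_r\equiv s^6\equiv1\pmod7$, so the prime $67\equiv18\pmod{49}$ can only occur with $\prod_rN_r=1$. Such seeds are easy to write down. In $\Z[\GG]$ put $x=(1,0)$, $y=(0,1)$ and $g_k(t)=1+t+\cdots+t^{k-1}$. Frobenius in $\F_7[\langle y\rangle]$ gives $g_k(y)^7=k+7h(y)$ with $h$ integral. For $7\nmid k$, set $\alpha_k=g_k(x)+(1+x+\cdots+x^6)h(y)$. When $a\ne0$, $\chi_{a,b}(\alpha_k)=g_k(\omega^a)$, a cyclotomic unit. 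When $a=0$, $\chi_{0,b}(\alpha_k)=g_k(\omega^b)^7$, which is a unit for $b\ne0$ and equals $k^7$ for $b=0$. Hence $\Theta_\GG(\alpha_k+c\mathbf1_\GG)=k^7+49c$. Since $k^7\equiv1,-19,-18,18,19,-1\pmod{49}$ for $k=1,\ldots,6$, these values cover $A_7$.

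For necessity you end up citing Trafford and DeSilva--Pinner, which is exactly what the paper does for both directions, so that half stands. However, your opening claim $N_r(v)\equiv s^6\pmod{49}$ for each direction is false. The vector $v=e_{0,0}+e_{1,0}-e_{2,0}$ has $s=1$, while $f_b(v)=1+\omega-\omega^2$ has norm $29$ for every $b\in\F_7$. Only the full product satisfies a congruence modulo $49$.

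Your Frobenius idea can be made into a short proof. Let $\alpha=\sum_gv_gg$, and let $M_\beta$ be the regular-representation matrix of $\beta\in\Z[\GG]$. Then $\alpha^7=t+7\gamma$ with $t=\sum_gv_g^7$, so $\Theta_\GG(v)^7=\det(tI+7M_\gamma)$. This is $\equiv t^{49}\pmod{7^3}$, because $\operatorname{tr}M_\gamma$ and $\operatorname{tr}M_\gamma^2$ are multiples of $49$. In $(\Z/343\Z)^\times\cong C_6\times C_{49}$ this forces $\Theta_\GG(v)^6\equiv1\pmod{49}$.
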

\begin{proof}
Apply DeSilva and Pinner's result
\cite[Lemmas~2.1--2.2]{DeSilvaPinner2014} with $(p,n)=(7,2)$.
\end{proof}

\subsection{The two exceptional valuations}\label{sec:valuation}

\begin{lemma}[First moments]\label{lem:second-order}\label{lem:nonreal}
Suppose that $7\mid s(v)$. For $(a,b)\ne(0,0)$ put
$L_v(a,b)=\sum v_{i,j}(ai+bj)$. Then
\[
 f_{a,b}(v)\equiv-\pi L_v(a,b)\pmod{\pp^2}.
\]
Thus the leading residue is $L_v(a,b)\bmod7$.
If this linear form is nonzero modulo seven, then exactly one projective
direction has character valuation at least two; otherwise all eight do.
\end{lemma}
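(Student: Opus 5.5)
We expand $\omega^{ai+bj}$ about $\omega=1$ and keep only the first two terms. Write $e=ai+bj$, regarded as an integer in $\{0,\ldots,6\}$ determined modulo $7$. Expanding $(1-\pi)^{e}$ gives
\[
\omega^{e}=(1-\pi)^e\equiv 1-e\pi\pmod{\pp^2},
\]
and this congruence depends only on $e\bmod 7$, because $7\in\pp^6\subseteq\pp^2$. Summing against the coefficients gives
\[
f_{a,b}(v)\equiv s(v)-\pi L_v(a,b)\pmod{\pp^2}.
\]
The hypothesis $7\mid s(v)$ puts $s(v)$ in $7R=\pp^6\subseteq\pp^2$, so the congruence $f_{a,b}(v)\equiv-\pi L_v(a,b)\pmod{\pp^2}$ follows.

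For the leading residue, recall that $\lambda$ was defined through $f\equiv-\lambda\pi\pmod{\pp^2}$. Since $\pi$ generates $\pp/\pp^2\simeq\F_7$, the class of $\lambda$ is determined by this congruence, and so $\lambda\equiv L_v(a,b)\pmod 7$. As a check against the ray description, $\mathcal M_1(c)=\sum_m m\,c^{(r)}(m)$ is congruent to $L_v$ modulo $7$.

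For the last assertion, Lemma~\ref{lem:digits} gives $v_\pp(f_{a,b})\ge2$ exactly when $7\mid\mathcal M_0$ and $7\mid\mathcal M_1$. The first of these is automatic, since $f_{a,b}\equiv s(v)\equiv 0\pmod\pp$. The second is equivalent, by the congruence above, to $L_v(a,b)\equiv0\pmod7$. Now $L_v(a,b)\equiv aX+bY$ with $X=\sum v_{i,j}i$ and $Y=\sum v_{i,j}j$, so $L_v$ is a linear form on $\F_7^2$. Its vanishing depends only on the projective class $(a:b)$, which agrees with the fact that Galois conjugates share a valuation. If $(X,Y)\not\equiv(0,0)$, the kernel of $(a,b)\mapsto aX+bY$ is a single line in $\F_7^2$. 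That line is exactly one point of $\mathbb P^1(\F_7)$, so exactly one direction has valuation at least two. If $(X,Y)\equiv(0,0)$, the form vanishes identically and all eight directions have valuation at least two.

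No step here is a real obstacle. The only point that needs care is that the exponent $ai+bj$ is defined only modulo $7$, and the first-order expansion of $(1-\pi)^e$ must be independent of the integer representative chosen. This holds because $7\in\pp^2$.
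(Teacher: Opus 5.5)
Your proof is correct and follows the paper's argument: the first-order expansion $\omega^k\equiv1-k\pi\pmod{\pp^2}$, the fact that $s(v)\in7R=\pp^6$, and the observation that a nonzero linear form on $\F_7^2$ has a single kernel line, i.e.\ one point of $\mathbb P^1(\F_7)$. Your detour through Lemma~\ref{lem:digits} is harmless but not needed, since the congruence $f_{a,b}\equiv-\pi L_v(a,b)\pmod{\pp^2}$ already shows that $v_\pp(f_{a,b})\ge2$ if and only if $7\mid L_v(a,b)$.
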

\begin{proof}
Expand $\omega^k\equiv1-k\pi\pmod{\pp^2}$ and use
$s(v)\in\pp^6$. A nonzero linear form on $\F_7^2$ has one kernel line.
\end{proof}

\begin{theorem}\label{thm:noncoprime}
If $7\mid\Theta_\GG(v)\ne0$, then $v_7(\Theta_\GG(v))\ge10$.
\end{theorem}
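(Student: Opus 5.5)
The plan is to combine the valuation formula \eqref{eq:v7-formula} with Lemma~\ref{lem:chi-mod-p} and Lemma~\ref{lem:second-order}.

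First, I will show that $7$ dividing the determinant forces every factor in \eqref{eq:factorization} to be divisible by $\pp$. Suppose $7\mid\Theta_\GG(v)\ne0$. Then either $7\mid s(v)$, or $7\mid N_r(v)$ for some direction $r$. In the second case Lemma~\ref{lem:chi-mod-p} gives $v_\pp(f_r(v))=v_7(N_r(v))\ge1$, so $f_r(v)\in\pp$. The same lemma gives $f_r(v)\equiv s(v)\pmod\pp$. Hence $s(v)\in\pp\cap\Z=7\Z$. So in every case $7\mid s(v)$. Applying the congruence of Lemma~\ref{lem:chi-mod-p} again, every character value then lies in $\pp$. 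Since the determinant is nonzero, all these valuations are finite. Formula \eqref{eq:v7-formula} therefore gives the crude bound $v_7(\Theta_\GG(v))\ge1+8=9$.

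To gain the missing unit, I will invoke Lemma~\ref{lem:second-order}. Because $7\mid s(v)$, each $f_{a,b}(v)\equiv-\pi L_v(a,b)\pmod{\pp^2}$. Consider the linear form $L_v$ on $\F_7^2$ and split into two cases.

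\begin{enumerate}
\item[(a)] If $L_v$ is nonzero modulo $7$, its kernel is a single projective direction $r_0$. For that direction $v_\pp(f_{r_0}(v))\ge2$, while the other seven directions contribute at least $1$ each. This gives $v_7(\Theta_\GG(v))\ge1+7+2=10$.
\item[(b)] If $L_v\equiv0$, all eight directions have valuation at least $2$. This gives $v_7(\Theta_\GG(v))\ge1+16=17\ge10$.
\end{enumerate}

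In both cases the bound $10$ follows.

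There is no real obstacle in this argument. The only point needing care is the first step: a single factor divisible by $7$ must propagate to all nine factors. This is exactly the congruence $\chi(v)\equiv s(v)\pmod\pp$ together with $\pp\cap\Z=7\Z$. The rest is bookkeeping in \eqref{eq:v7-formula}, using Lemma~\ref{lem:second-order} to supply the one extra unit in the distinguished direction. The same bookkeeping shows where valuation $10$ can come from: case (a) with $v_7(s)=1$, every other ray of valuation exactly one, and the distinguished ray of valuation exactly two. That is presumably the configuration analysed later for $L_{10}$.
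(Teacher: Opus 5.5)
Your proof is correct and follows the paper's own argument. Lemma~\ref{lem:chi-mod-p} forces $7\mid s(v)$, and Lemma~\ref{lem:second-order} gives orbit valuations summing to at least $9$ (or $16$ when $L_v\equiv0$), which \eqref{eq:v7-formula} turns into the bound; your explicit check that one factor divisible by $7$ forces $s(v)\in\pp\cap\Z=7\Z$ just spells out a step the paper leaves implicit.
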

\begin{proof}
Lemma~\ref{lem:chi-mod-p} forces $7\mid s(v)$.
The first-moment calculation in Lemma~\ref{lem:second-order}
gives $\sum_rv_\pp(f_r)\ge9$ if $L_v\ne0\pmod7$, and at least
$16$ otherwise. The valuation formula \eqref{eq:v7-formula}
therefore gives the bound.
\end{proof}
This is the case $p=7$ of the bound in
Trafford~\cite[Lemma~13]{Trafford1992} and
Mossinghoff and Pinner~\cite[Theorem~2.1]{MossinghoffPinnerHeisenberg}.
The same argument determines all profiles at valuations ten and eleven.

\begin{lemma}[Valuation profiles]\label{lem:pinning}
Suppose that $D=\Theta_\GG(v)\ne0$. If $v_7(D)=10$, then $v_7(s)=1$
and the orbit valuations have profile $(1^7,2)$. If $v_7(D)=11$,
then either $v_7(s)=2$ with profile $(1^7,2)$, or $v_7(s)=1$
with profile $(1^7,3)$. In each case
\[
 D/7^{v_7(D)}=(s/7^{v_7(s)})\prod_rm_r,\qquad
 m_r=N_r/7^{v_\pp(f_r)},
\]
where each $m_r$ is a positive norm from $R$ coprime to seven.
\end{lemma}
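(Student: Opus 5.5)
The plan is to combine the valuation formula \eqref{eq:v7-formula} with the first-moment dichotomy of Lemma~\ref{lem:second-order}, and then count how much valuation is left over.

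Since $D\ne0$ and $7\mid D$, Lemma~\ref{lem:chi-mod-p} gives $7\mid s$, so $v_7(s)\ge1$ and every $f_r$ lies in $\pp$. Lemma~\ref{lem:second-order} then splits into two cases. In the first, the linear form $L_v$ is nonzero modulo $7$. Then exactly one direction has $v_\pp(f_r)\ge2$, and the other seven have valuation exactly one, because the leading residue $L_v(a,b)\bmod 7$ is nonzero there. In the second case, all eight directions have valuation at least two, so $\sum_r v_\pp(f_r)\ge16$ and $v_7(D)\ge17$. Hence valuations $10$ and $11$ force the first case, and we may write
\[
 v_7(D)=v_7(s)+7+e,\qquad e=v_\pp(f_{r_0})\ge2,
\]
where $r_0$ is the exceptional direction.

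Now we solve for the pairs $(v_7(s),e)$ with $v_7(s)\ge1$ and $e\ge2$. If $v_7(D)=10$, then $v_7(s)+e=3$, which forces $v_7(s)=1$ and $e=2$; this is profile $(1^7,2)$. If $v_7(D)=11$, then $v_7(s)+e=4$, giving either $(2,2)$ or $(1,3)$, which are the two stated profiles.

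It remains to establish the cofactor identity. From \eqref{eq:factorization} we have $D=s\prod_rN_r$. By Lemma~\ref{lem:chi-mod-p}, $v_7(N_r)=v_\pp(f_r)$, so $m_r=N_r/7^{v_\pp(f_r)}$ is an integer coprime to $7$. Dividing $D$ by $7^{v_7(D)}=7^{v_7(s)}\prod_r7^{v_\pp(f_r)}$ gives the displayed product.

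Positivity of $m_r$ follows because $N_r$ is the norm of a nonzero element of $R$, and the paper notes that such norms are positive. Each $m_r$ is itself a norm from $R$: write $f_r=\pi^{e_r}u_r$ with $u_r\in R$ coprime to $\pp$, using that $\pp=\pi R$ is principal. Since $\Nm(\pi)=7$, we get $m_r=\Nm(u_r)$.

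No step here is a real obstacle; the main care is in confirming that the seven non-exceptional directions have valuation exactly one. That comes from the leading residue being nonzero off the kernel line of $L_v$.
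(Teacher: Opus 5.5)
Your proposal is correct and follows the paper's proof: Lemma~\ref{lem:second-order} excludes the case where all eight orbit valuations are at least two (their sum would be at least $16$), the valuation formula \eqref{eq:v7-formula} then fixes $(v_7(s),e)$, and the cofactor identity follows from $m_r=\Nm(f_r/\pi^{v_\pp(f_r)})$ with $\Nm\pi=7$. You spell out bookkeeping that the paper's three-line argument leaves implicit, such as why the seven generic directions have valuation exactly one.
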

\begin{proof}
If all orbit valuations were at least two, their sum would be at
least $16$. Otherwise seven are one, and the eighth is at least two.
The valuation formula \eqref{eq:v7-formula} gives the stated possibilities.
Also $m_r=\Nm(f_r/\pi^{v_\pp(f_r)})$, since $\Nm\pi=7$.
\end{proof}
Call the valuation-one directions \emph{generic} and the remaining
one \emph{special}. Move the latter to $\infty$. Then $L_v(0,1)=0$
and the seven values $L_v(1,b)=L_v(1,0)\ne0$ coincide.
Lemma~\ref{lem:normalize}(ii) normalizes this common residue to one.


\subsection{Integral reconstruction}\label{sec:method}

To construct a determinant, we choose its eight character values
and ask whether they come from an integer vector.
Their ray residues give the exact integrality condition.

\begin{lemma}[Reconstruction]\label{lem:interp}
Let $c^{(r)}\in\Z^7$ have common coordinate sum $s$, and put
\[
 F(i,j)=c^{(\infty)}(j)+\sum_{b\in\F_7}c^{(b)}(i+bj).
\]
\begin{enumerate}
\item[(i)] If the $c^{(r)}$ are the ray vectors of $v$, then
$F(i,j)=7v_{i,j}+s$.
\item[(ii)] Given $f_r\in R$ with $f_r\equiv s\pmod\pp$, take their
ray vectors of sum $s$. An integral vector with character data
$(s;\sigma_t(f_r))$ exists if and only if $F\equiv s\pmod7$ on
$\F_7^2$. When it exists, it is $v=(F-s)/7$.
\end{enumerate}
\end{lemma}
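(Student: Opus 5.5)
Part (i) is a direct computation with the eight lines through the origin of $\F_7^2$, and part (ii) then follows by Fourier-type inversion on $\F_7^2$.

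\textbf{Part (i).} We compute $F(i,j)$ by unfolding the ray vectors. By definition,
\[
 c^{(\infty)}(j)=\sum_{j'=j}v_{i',j'},\qquad
 c^{(b)}(i+bj)=\sum_{i'+bj'=i+bj}v_{i',j'}.
\]
Hence $F(i,j)=\sum_{(i',j')}n(i',j')\,v_{i',j'}$, where $n(i',j')$ counts the lines $r\in\mathbb P^1(\F_7)$ with $\ell_r(i'-i,j'-j)=0$. The eight forms $\ell_r$ have kernels equal to the eight lines through the origin of $\F_7^2$, and these lines partition the nonzero vectors.
\begin{itemize}
\item If $(i',j')=(i,j)$, the point lies in all eight kernels, so $n=8$.
\item Otherwise the difference $(i'-i,j'-j)$ is nonzero and lies in exactly one kernel, so $n=1$.
\end{itemize}
Therefore $F(i,j)=8v_{i,j}+\bigl(s-v_{i,j}\bigr)=7v_{i,j}+s$.

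\textbf{Part (ii), necessity.} Suppose an integral $v$ has augmentation $s$ and character values $f_{a,b}(v)=\sigma_t(f_r)$ on the orbit of $r$. Its ray vector $c^{(r)}(v)$ has sum $s$ and evaluates by \eqref{eq:ray} to $f_r$. By the uniqueness statement preceding Lemma~\ref{lem:cbar-window}, $c^{(r)}(v)$ is exactly the ray vector we built from $(f_r,s)$. Part (i) then gives $F=7v+s$, so $F\equiv s\pmod 7$ and $v=(F-s)/7$.

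\textbf{Part (ii), sufficiency.} Assume $F\equiv s\pmod7$ and put $v=(F-s)/7\in\Z^{49}$. We must check that $s(v)=s$ and that the ray vectors of $v$ are the given $c^{(r)}$; the character values then follow from \eqref{eq:ray} and the Galois action. Both claims reduce to computing sums of $F$ over lines.

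For the augmentation, each $c^{(r)}$ contributes its total sum $s$ exactly seven times as $(i,j)$ ranges over $\F_7^2$, since $\ell_r$ hits each value seven times. Hence $\sum F=8\cdot7s=56s$, and so $s(v)=(56s-49s)/7=s$.

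For the ray vectors, fix $r$ and $m$ and sum $F$ over the line $\ell_r=m$, which has seven points.
\begin{itemize}
\item The term $c^{(r)}(\ell_r(i,j))$ contributes $7c^{(r)}(m)$.
\item For $r'\ne r$, the form $\ell_{r'}$ restricted to this line is a bijection onto $\F_7$, because the lines are not parallel. So the term $c^{(r')}$ contributes its sum $s$.
\end{itemize}
The line sum of $F$ is therefore $7c^{(r)}(m)+7s$. Subtracting $7s$, the line sum of $7v$ is $7c^{(r)}(m)$, so the ray vector of $v$ in direction $r$ is $c^{(r)}$.

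I expect no real obstacle; the only step needing care is the bijectivity argument for $r'\ne r$, which holds precisely because distinct directions $r,r'$ give independent linear forms.
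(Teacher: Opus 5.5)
Your proof is correct and follows the paper's argument. For (i) you count the point itself eight times and every other point once, using the eight lines through the origin. Necessity in (ii) comes from (i) together with uniqueness of ray vectors of sum $s$, and sufficiency comes from summing $F$ over each line $\ell_r=m$. Your separate check of the augmentation via $\sum F=56s$ and your explicit derivation that $v=(F-s)/7$ is forced are details the paper leaves implicit.
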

\begin{proof}
For (i), the cell $(i,j)$ is counted eight times in $F(i,j)$, and
every other cell once.
For (ii), necessity follows from (i).
To prove sufficiency, sum $F$ on a line $\ell_r(i,j)=m$.
The $r$-summand contributes
$7c^{(r)}(m)$ to $\sum F$; each of the other seven directions takes
every value once and contributes $s$. Hence
\[
 \sum_{\ell_r(i,j)=m}(F(i,j)-s)/7=c^{(r)}(m).
\]
Thus $(F-s)/7$ has every prescribed ray vector and augmentation.
\end{proof}

We solve this integrality condition one polynomial degree at a time.
\begin{lemma}[Degree decomposition]\label{lem:degree}
Write $h_d(b)=\gamma_d(\bar c^{(b)})$ and
$\gamma_{\infty,d}=\gamma_d(\bar c^{(\infty)})$.
The function $\bar F=\sum_r\bar c^{(r)}\circ\ell_r$ is constant
if and only if, for every $d=1,\ldots,6$,
\begin{enumerate}
\item[(K$_d$)] $\sum_bh_d(b)b^k=0$ for $0\le k<d$, and
$\gamma_{\infty,d}=-\sum_bh_d(b)b^d$.
\end{enumerate}
Equivalently, $h_d$ is a polynomial function of degree at most
$6-d$, and $\gamma_{\infty,d}$ is the displayed negative moment.
The constant value is then $\sum_r\gamma_0(\bar c^{(r)})$.
\end{lemma}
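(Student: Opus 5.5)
Expand each $\bar c^{(r)}$ in its polynomial representation and substitute the linear forms. For a direction $b\in\F_7$ we get $\bar c^{(b)}(i+bj)=\sum_d h_d(b)(i+bj)^d$, and for $\infty$ we get $\sum_d\gamma_{\infty,d}j^d$. Every function $\F_7^2\to\F_7$ has a unique reduced polynomial representative with exponents at most $6$ in each variable. Moreover, $(i+bj)^d$ for $d\le6$ is already reduced. So $\bar F$ is constant exactly when every coefficient of a nonconstant reduced monomial vanishes.

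\textbf{Step 1: separate by degree.} Every term is homogeneous of total degree $d$, with $d\le 6$. Hence monomials of different total degree never combine, and the vanishing conditions split by $d$. The constant term collects the $d=0$ contributions, which gives the stated constant value $\sum_r\gamma_0(\bar c^{(r)})$.

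\textbf{Step 2: compute the coefficients in degree $d\ge1$.} The coefficient of $i^{d-k}j^k$ is
\[
 \binom dk\sum_b h_d(b)b^k \quad (0\le k<d),
\]
since the $\infty$ direction contributes only to $j^d$. The coefficient of $j^d$ is $\sum_bh_d(b)b^d+\gamma_{\infty,d}$. Because $d\le6<7$, the binomial coefficients are units mod $7$. Setting all of these to zero gives exactly (K$_d$), which proves the main equivalence.

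\textbf{Step 3: reformulate.} The conditions $\sum_bh_d(b)b^k=0$ for $0\le k<d$ say that $h_d$ is orthogonal to the polynomial functions of degree $<d$ under the pairing $\langle g,h\rangle=\sum_{b\in\F_7}g(b)h(b)$. I will use the power-sum fact that $\sum_b b^n$ is nonzero only for $n=6$ (where it equals $-1$), including $n=0$ with the convention $0^0=1$. It follows that $b^e$ pairs to zero against $b^k$ unless $e+k=6$, and that pairing is nondegenerate. Hence the annihilator of the polynomials of degree $<d$ is the span of $b^e$ with $e+k\ne6$ for all $k<d$, that is, with $e\le6-d$. This is the polynomials of degree at most $6-d$.

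The only step needing care is Step 3's annihilator computation, and in particular the treatment of $0^0$ and of the $n=0$ case of the power sums. Everything else is bookkeeping.
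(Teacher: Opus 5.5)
Your proof is correct and follows essentially the same route as the paper. Both arguments use the uniqueness of reduced polynomial representatives on $\F_7^2$, split into homogeneous components whose binomial coefficients are units modulo $7$, and apply the power sums on $\F_7$ to turn the vanishing moments into $\deg h_d\le 6-d$.

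One small precision: $\sum_b b^{12}=-1$ as well, so $b^6$ pairs nontrivially with itself, and your power-sum fact holds only for $0\le n\le 11$. This range is enough for the argument, because only $k\le d-1\le 5$ and $e\le 6$ occur.
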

\begin{proof}
Evaluation is injective on polynomials of degree at most six in
each variable. Thus each homogeneous component of positive degree
in $\sum_r\bar c^{(r)}(\ell_r(x,y))$ must vanish.
For degree $d$, the coefficients of $x^{d-k}y^k$ are
$\binom dk\sum_bh_d(b)b^k$ for $k<d$, and
$\sum_bh_d(b)b^d+\gamma_{\infty,d}$ for $k=d$.
The binomial coefficients are nonzero modulo seven.
Finally, the first $d$ moments vanish exactly when the coefficients
of $b^6,b^5,\ldots,b^{7-d}$ in $h_d$ vanish, by the power sums on
$\F_7$.
\end{proof}

Once the nonconstant terms vanish, only a condition on the sum remains.
For $s=7\sigma_0$, first normalize all local ray vectors to sum
seven. Changing their sum to $s$ adds $\sigma_0-1$ to each
$\gamma_0$, by Lemma~\ref{lem:cbar-window}. After the nonconstant
conditions hold, $\bar F\equiv s=0\pmod7$ therefore becomes
\begin{equation}\label{eq:kappa}
 \sum_r\gamma_0(\bar c^{(r)})=\kappa,
 \qquad \kappa=1-\sigma_0\pmod7.
\end{equation}
Thus $\kappa=1$ exactly when $49\mid s$.
For every $s^*\equiv s\pmod{49}$, replacing $s$ by $s^*$ leaves
all ray residues unchanged. Hence every feasible configuration
realizes that full progression of sums.

The next two identities reduce the degree-two conditions after
the ray-coordinate formulas are substituted.
\begin{lemma}\label{lem:identity}
Let $x_3,x_4\in\F_7[b]$ have degrees at most $3,2$, respectively,
and write $q_i=[b^i]x_4$ and $x_{33}=[b^3]x_3$. Then
\begin{align*}
 \sum_b(-x_4(b)^3-2x_3(b)x_4(b))&=q_2^3,\\
 \sum_b b(-x_4(b)^3-2x_3(b)x_4(b))
     &=3q_1q_2^2+2x_{33}q_2.
\end{align*}
\end{lemma}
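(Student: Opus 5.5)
The plan is to reduce both identities to power sums over $\F_7$. The basic fact is that $\sum_{b\in\F_7}b^k$ equals $-1$ when $k$ is a positive multiple of $6$, and equals $0$ for every other $k\ge0$; here we use $0^0=1$, so $\sum_b b^0=7=0$. Hence, for any polynomial $P\in\F_7[b]$, the sum $\sum_bP(b)$ is $-\sum_{6\mid k,\,k>0}[b^k]P$. We will apply this with $P=-x_4^3-2x_3x_4$ and with $P=b(-x_4^3-2x_3x_4)$. By the degree bounds, $\deg x_4^3\le6$ and $\deg x_3x_4\le5$, so $\deg P\le6$ in the first case and $\deg P\le7$ in the second. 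Thus only the coefficient of $b^6$ survives in each case, and
\[
\sum_b b^eP_0(b)=-[b^{6-e}]P_0 \qquad (e=0,1),
\]
where $P_0=-x_4^3-2x_3x_4$.

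For $e=0$ we need $[b^6]P_0$. The product $x_3x_4$ has degree at most $5$, so it contributes nothing. Since $[b^6]x_4^3=q_2^3$, we get $[b^6]P_0=-q_2^3$, and the sum is $q_2^3$, as claimed.

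For $e=1$ we need $[b^5]P_0$. Writing $x_4=q_2b^2+q_1b+q_0$, the coefficient of $b^5$ in $x_4^3$ comes only from choosing $b^2,b^2,b^1$ from the three factors, so $[b^5]x_4^3=3q_2^2q_1$. The coefficient of $b^5$ in $x_3x_4$ comes only from the top terms, so $[b^5]x_3x_4=x_{33}q_2$. Hence $[b^5]P_0=-3q_1q_2^2-2x_{33}q_2$, and negating gives $3q_1q_2^2+2x_{33}q_2$.

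No step is a real obstacle. The only care needed is with the sign convention in the power sum (that $\sum_bb^6=6=-1$) and with the $0^0=1$ convention, which makes the constant term contribute $7\equiv0$. Both identities then follow from reading off a single top coefficient.
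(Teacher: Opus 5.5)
Your proposal is correct and takes essentially the same approach as the paper. Both arguments use the fact that a polynomial of degree at most seven sums over $\F_7$ to minus its $b^6$ coefficient, and both then read off $[b^6]x_4^3=q_2^3$, $[b^5]x_4^3=3q_1q_2^2$ and $[b^5]x_3x_4=x_{33}q_2$, with your explicit treatment of $0^0=1$ and $\sum_b b^6=-1$ spelling out what the paper leaves implicit.
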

\begin{proof}
For a polynomial of degree at most seven, its sum on $\F_7$ is
minus its $b^6$ coefficient. In the first identity only $x_4^3$
contributes. In the second, the relevant coefficients are
$[b^5]x_4^3=3q_1q_2^2$ and $[b^5]x_3x_4=x_{33}q_2$.
\end{proof}


\subsection{The unit image and ray residues}\label{sec:defect}

Generators of a fixed cofactor ideal differ by a global unit.
We determine their possible ray residues by first computing the
image of these units in the local ring.

\subsubsection{Finite logarithms and global units}

Put $A=R/7R$ and $I=\pi A$.  Since $(7)=\pp^6$, we have
$A\cong\F_7[\pi]/(\pi^6)$.  Define
\[
 \log(1+X)=\sum_{j=1}^{5}\frac{(-1)^{j+1}X^j}{j},\qquad
 \exp(X)=\sum_{j=0}^{5}\frac{X^j}{j!}\qquad(X\in I).
\]
All denominators are invertible. The formal identities through degree
five give mutually inverse group isomorphisms
$\log\colon1+I\longrightarrow I$ and $\exp\colon I\longrightarrow1+I$.
These are finite operations in characteristic seven. In particular,
\[
 \Lambda=\log\omega=-\pi-\pi^2/2-\pi^3/3-\pi^4/4-\pi^5/5.
\]
Its linear coefficient is invertible, so $A=\F_7[\Lambda]/(\Lambda^6)$
and $\pi=1-\exp\Lambda$.

For $z\in A^\times$, let $a=z\bmod I\in\F_7^\times$, viewed as a
constant in $A$, and write
\begin{equation}\label{eq:finite-log}
 \mathcal L(z):=\log(a^{-1}z)
       =\sum_{j=1}^{5}c_j(z)\Lambda^j.
\end{equation}
The map $\mathcal L\colon A^\times\to I$ is a surjective homomorphism with
kernel $\F_7^\times$.  For $t\in\F_7^\times$, the automorphism
$\sigma_t(\omega)=\omega^t$ fixes the constant field and satisfies
$\sigma_t\Lambda=t\Lambda$.  Consequently
\begin{equation}\label{eq:log-characters}
 c_j(zz')=c_j(z)+c_j(z'),\qquad
 c_j(\sigma_tz)=t^jc_j(z).
\end{equation}

The third and fifth coefficients measure the obstruction to
being the residue of a global unit.
\begin{definition}\label{def:de}
For $z\in R$ coprime to $\pi$, define its two \emph{defects} by
\begin{equation}\label{eq:log-defects}
 \dd(z)=4c_3(z\bmod7R),\qquad \ee(z)=c_5(z\bmod7R).
\end{equation}
All defect values and subsequent ray-coordinate formulas lie in $\F_7$.
\end{definition}

\begin{lemma}[The global-unit image]\label{lem:unit-images}
Let $U=\langle\Lambda,\Lambda^2,\Lambda^4\rangle_{\F_7}$.  The image of
$R^\times$ in $A^\times$ is
\[
 \overline{R^\times}=\F_7^\times\exp(U).
\]
If $\overline{R^\times}(k)$ denotes its image modulo $\pp^k$, then
\[
 |\overline{R^\times}(4)|=6\cdot7^2=294,\qquad
 |\overline{R^\times}(5)|=|\overline{R^\times}(6)|
       =6\cdot7^3=2058.
\]
The logarithm induces the isomorphisms
\[
 A^\times/\overline{R^\times}
  \cong I/U\cong\F_7\Lambda^3\oplus\F_7\Lambda^5.
\]
\end{lemma}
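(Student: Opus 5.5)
The plan is to compute the image of $R^\times$ under $\mathcal L$ directly, using an explicit generating set for the units and the Galois equivariance \eqref{eq:log-characters}. The key structural input is Kummer's lemma for $K=\Q(\omega)$: every unit of $R$ has the form $\pm\omega^k\eta$ with $\eta$ a real unit, i.e.\ $\eta\in R^+=\Z[\omega+\omega^{-1}]$. I will also use that for $p=7$ (where $h^+=1$) the real cyclotomic units $\xi_t=\omega^{(1-t)/2}(1-\omega^t)/(1-\omega)$, $t\in\F_7^\times$ (with the exponent read modulo $7$), generate $R^{+\times}$ up to sign. In fact, for the inclusion ``$\supseteq$'' I only need that these are units. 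For the inclusion ``$\subseteq$'' I only need Kummer's lemma.

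\emph{Upper bound.} The torsion units contribute $\pm1$, which lies in $\F_7^\times$, and $\omega=\exp\Lambda$, which lies in $\exp(\F_7\Lambda)$. For a real unit $\eta$ we have $\sigma_{-1}\eta=\eta$. By \eqref{eq:log-characters} this gives $c_j(\eta)=(-1)^jc_j(\eta)$, so $c_1(\eta)=c_3(\eta)=c_5(\eta)=0$. Hence $\mathcal L(\eta)\in\F_7\Lambda^2\oplus\F_7\Lambda^4$. Since $\mathcal L$ is a homomorphism with kernel $\F_7^\times$, we get $\overline{R^\times}\subseteq\F_7^\times\exp(U)$.

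\emph{Lower bound.} First, $\xi_t\equiv t\pmod\pp$, so every constant residue in $\F_7^\times$ occurs. Next I compute $\mathcal L$ of $u_t=(1-\omega^t)/(1-\omega)$, which differs from $\xi_t$ only by a power of $\omega$. Writing $\omega^t=\exp(t\Lambda)$, we have $1-\omega^t=-t\Lambda\cdot(\exp(t\Lambda)-1)/(t\Lambda)$. Therefore
\[
\mathcal L(u_t)=g(t\Lambda)-g(\Lambda),\qquad g(x)=\log\frac{e^x-1}{x}=\frac x2+\frac{B_2}{2\cdot2!}x^2+\frac{B_4}{4\cdot4!}x^4+\cdots,
\]
with only even terms beyond the linear one. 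This is legitimate because all the denominators through degree five are prime to $7$. It follows that
\[
c_2(u_t)=\frac{t^2-1}{24},\qquad c_4(u_t)=-\frac{t^4-1}{2880}.
\]
For $t=2$ both coefficients are nonzero in $\F_7$. For $t=3$ the vectors $(c_2,c_4)$ are proportional to $(t^2-1,t^4-1)$, namely $(8,80)\equiv(1,3)$ for $t=3$ and $(3,15)\equiv(3,1)$ for $t=2$. These two vectors are independent. Together with $\Lambda=\mathcal L(\omega)$, the values $\mathcal L(u_2)$ and $\mathcal L(u_3)$ therefore span $U$. The Bernoulli bookkeeping modulo $7$ is the step most at risk of arithmetic error, so I would double-check it by expanding $1-\omega^2$ directly in powers of $\pi$.

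\emph{Counting and quotient.} Since $\exp$ is an isomorphism $I\to1+I$, and $\F_7^\times\cap(1+I)=1$, we have $|\overline{R^\times}|=6\cdot|U|$. Reducing modulo $\pp^k$ corresponds to working modulo $\Lambda^k$, because $\Lambda$ is a uniformizer. Modulo $\pp^4$ the image of $U$ is $\F_7\Lambda\oplus\F_7\Lambda^2$, giving $6\cdot49=294$ elements. Modulo $\pp^5$ and modulo $\pp^6$ the image is all of $U$, giving $6\cdot343=2058$ elements. Finally, $\mathcal L$ induces $A^\times/\F_7^\times\cong I$. Under this isomorphism $\overline{R^\times}/\F_7^\times$ maps onto $U$, so the quotient is $I/U\cong\F_7\Lambda^3\oplus\F_7\Lambda^5$.

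The main obstacle is justifying the upper bound, which rests on Kummer's lemma. I would cite it from Washington (Prop.~1.5); everything else is the finite computation described above.
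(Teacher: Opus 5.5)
Your approach is the paper's own, and it is correct apart from one missing step. Kummer's lemma is exactly what the paper proves inline: it applies Kronecker to $u/\bar u$, which must equal $\omega^a$, so that $c_3$ and $c_5$ vanish. Your units $u_2=1+\omega$ and $u_3=1+\omega+\omega^2$, together with $\omega$, are the paper's generators. Your closed form $\mathcal L(u_t)=g(t\Lambda)-g(\Lambda)$ reproduces the paper's values $\Lambda/2+\Lambda^2/8-\Lambda^4/192=4\Lambda+\Lambda^2+2\Lambda^4$ and $\Lambda+\Lambda^2/3-\Lambda^4/36=\Lambda+5\Lambda^2+6\Lambda^4$, and the same nonzero even-coordinate determinant.

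\textbf{The missing step.} You show two things: residues of units modulo $\pp$ exhaust $\F_7^\times$, and $\mathcal L(R^\times)=U$. This does not yet show that the constants $\F_7^\times\subset A^\times$, or the elements $\exp(X)$ with $X\in U$, lie in $\overline{R^\times}$. A unit with $\mathcal L(u)=X$ has residue $a\exp(X)$ for some $a$ you do not control. Both your count $|\overline{R^\times}|=6|U|$ and the isomorphism $A^\times/\overline{R^\times}\cong I/U$ presuppose $\F_7^\times\subseteq\overline{R^\times}$.

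\textbf{The fix.} The paper closes this with a seventh power. Since $\mathcal L(u^7)=7\mathcal L(u)=0$, we get $u^7\equiv a$ in $A$. For example, $(1+\omega+\omega^2)^7=3$ in $A$, and $3$ generates $\F_7^\times$. Equivalently, $\overline{R^\times}$ is a subgroup of $\F_7^\times\times\exp(U)$, a product of groups of coprime orders $6$ and $7^3$. Both projections are surjective, so it is the whole product. With this line added, your argument is complete.

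\textbf{A minor point.} Do not derive $\bar u_t=t\,h(t\Lambda)/h(\Lambda)$, with $h(x)=(e^x-1)/x$, by cancelling $\Lambda$ in $A$. That cancellation only determines $u_t$ modulo $\Lambda^5$. Instead use $u_t=\sum_{k<t}\omega^k$ and the power-series identity $h(x)\sum_{k<t}e^{kx}=t\,h(tx)$ over $\Z_{(7)}$ modulo $x^6$. For the coefficients $c_2,c_4$ you actually use, the two derivations give the same result.
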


\begin{proof}
We first show that global units have zero third and fifth
logarithmic coefficients.
For a global unit $u$, the quotient $u/\bar u$ is an algebraic
unit whose conjugates all have absolute value one.
By Kronecker's theorem
\cite[Chapter~I, Proposition~7.1]{Neukirch1999}, this quotient is a
root of unity, hence $\pm\omega^a$. Its residue modulo $\pp$
is one, so $u/\bar u=\omega^a$. Since conjugation sends
$\Lambda$ to $-\Lambda$, the identity
$\mathcal L(u)-\mathcal L(\bar u)=a\Lambda$ forces $c_3(u)=c_5(u)=0$.
Therefore $\overline{R^\times}\subseteq\F_7^\times\exp(U)$.

For the reverse inclusion, use the explicit units
$\omega,1+\omega$ and $1+\omega+\omega^2$. The last two are units
because they are the quotients $(1-\omega^2)/(1-\omega)$ and
$(1-\omega^3)/(1-\omega)$, respectively, and numerator and denominator
generate the same prime ideal. Finite expansion gives
\begin{align*}
 \mathcal L(\omega)&=\Lambda,\\
 \mathcal L(1+\omega)
   &=\frac{\Lambda}{2}+\frac{\Lambda^2}{8}
                   -\frac{\Lambda^4}{192}
     =4\Lambda+\Lambda^2+2\Lambda^4,\\
 \mathcal L(1+\omega+\omega^2)
   &=\Lambda+\frac{\Lambda^2}{3}-\frac{\Lambda^4}{36}
     =\Lambda+5\Lambda^2+6\Lambda^4.
\end{align*}
The even-coordinate determinant is $1\cdot6-2\cdot5=3\ne0$, so
these logarithms span $U$. Integer powers realize every linear
combination. Also $(1+\omega+\omega^2)^7=3$ in $A$, and $3$
generates $\F_7^\times$. Removing the constant residue therefore
realizes every $\exp(X)$, $X\in U$. Reduction modulo
$\pp^k=(\Lambda^k)$ gives the image sizes and quotient.
\end{proof}

\begin{theorem}[Defect structure]\label{thm:de-props}
\begin{enumerate}
\item[(a)] The exact conductor exponents of $\dd$ and $\ee$ are $4$ and
$6$, respectively: $\dd$ depends only on $z\bmod\pp^4$ and $\ee$ only
on $z\bmod\pp^6$, and neither factors through a smaller power.
\item[(b)] The defects are additive on products and vanish on $R^\times$.
\item[(c)] For every $t\in\F_7^\times$,
\[
 \dd(\sigma_tz)=\Bigl(\frac{t}{7}\Bigr)\dd(z),\qquad
 \ee(\sigma_tz)=t^{-1}\ee(z).
\]
\item[(d)] The map $(\dd,\ee)\colon A^\times\to\F_7^2$ is surjective and
has kernel exactly $\overline{R^\times}$.
\end{enumerate}
Since $h_K=1$, the defects are consequently well defined on ideals
coprime to $7$, additive on ideal products, equivariant as in~(c), and
zero on ideals generated by rational integers coprime to~$7$.
\end{theorem}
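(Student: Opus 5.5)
The plan is to read every part off the finite logarithm $\mathcal L$ of \eqref{eq:finite-log}, using the coefficient identities \eqref{eq:log-characters} and Lemma~\ref{lem:unit-images}.

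\textbf{Part (b).} Additivity is immediate from $c_j(zz')=c_j(z)+c_j(z')$. Lemma~\ref{lem:unit-images} gives $\mathcal L(\overline{R^\times})=U$, and $U$ has zero $\Lambda^3$ and $\Lambda^5$ coordinates. Hence $\dd$ and $\ee$ vanish on units.

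\textbf{Part (d).} Since $\mathcal L\colon A^\times\to I$ is surjective with kernel $\F_7^\times$, the map $z\mapsto(c_3(z),c_5(z))$ is surjective onto $\F_7^2$. Its kernel is $\mathcal L^{-1}(U)=\F_7^\times\exp(U)=\overline{R^\times}$, again by Lemma~\ref{lem:unit-images}. Multiplying the first coordinate by $4$ is a bijection, so $(\dd,\ee)$ is surjective with the same kernel.

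\textbf{Part (c).} From $c_j(\sigma_tz)=t^jc_j(z)$ we get $\dd(\sigma_tz)=t^3\dd(z)$ and $\ee(\sigma_tz)=t^5\ee(z)$. In $\F_7^\times$ we have $t^3=(t/7)$ by Euler's criterion and $t^5=t^{-1}$ because $t^6=1$.

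\textbf{Part (a).} The key point is that $\pp^k A=\Lambda^kA$, since $\Lambda=-\pi\cdot(\text{unit})$. I will show that $\dd$ depends only on $z\bmod\pp^4$. Let $z'=z(1+y)$ with $y\in\Lambda^4A$, so $z'\equiv z\pmod{\pp^4}$ and both have the same constant residue $a$. Then
\[
 \mathcal L(z')=\mathcal L(z)+\log(1+y),
\]
and $\log(1+y)\in\Lambda^4A$ has zero $\Lambda^3$ coefficient. So $c_3(z')=c_3(z)$.

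Next I show that $\dd$ does not factor through $\pp^3$. Take $z=\exp(\Lambda^3)$. Then $z\equiv1\pmod{\pp^3}$ but $c_3(z)=1$, so $\dd(z)=4\ne0=\dd(1)$.

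For $\ee$, the precision $\pp^6$ is all of $A$, so it certainly depends only on $z\bmod\pp^6$. To see that it does not factor through $\pp^5$, take $\exp(\Lambda^5)\equiv1\pmod{\pp^5}$, which has $c_5=1$.

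There is one subtlety: the values must be well defined on $R$, i.e.\ $\dd$ should depend only on $z\bmod7R$. This is automatic because the definition reduces $z$ modulo $7R$.

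\textbf{Final assertion.} For an ideal $\mathfrak a$ coprime to $7$, choose a generator $\alpha$; one exists because $h_K=1$. Different generators differ by a unit, so by (b) the defects do not depend on the choice. Additivity and equivariance then transfer to ideals, using that $\sigma_t(\alpha)$ generates $\sigma_t\mathfrak a$.

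For $n\in\Z$ coprime to $7$, the residue of $n$ in $A$ is the constant $a\equiv n$. Then $a^{-1}n\equiv1\pmod{7R}$, so $\mathcal L(n)=0$ and both defects vanish.

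I expect no real obstacle. The only care needed is bookkeeping in (a): $\pp^k$ corresponds to $\Lambda^k$, and the logarithm respects this filtration.
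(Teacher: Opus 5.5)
Your proposal is correct and follows essentially the same route as the paper: every part is read off from the finite logarithm $\mathcal L$, the identities for $c_j(zz')$ and $c_j(\sigma_t z)$, and Lemma~\ref{lem:unit-images}, with $\exp(\Lambda^3)$ and $\exp(\Lambda^5)$ serving as the witnesses for exactness in (a). The only differences are minor. You get surjectivity in (d) from surjectivity of $\mathcal L$, whereas the paper uses the defect pairs $(4,0)$ and $(0,1)$ of those two classes. You also write out the filtration argument showing that $c_3$ depends only on $z \bmod \pp^4$, which the paper states in one line.
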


\begin{proof}
The coefficient $c_j$ depends only on the residue modulo
$\pp^{j+1}=(\Lambda^{j+1})$. The classes $\exp(\Lambda^3)$ and
$\exp(\Lambda^5)$ have defect pairs $(4,0)$ and $(0,1)$ and lie
in $1+\pp^3$ and $1+\pp^5$, respectively. This proves exactness
and surjectivity. Equation~\eqref{eq:log-characters} gives additivity
and equivariance, since $t^3=(\frac t7)$ and $t^5=t^{-1}$.
Lemma~\ref{lem:unit-images} identifies the common kernel. Ideal
generators are unique up to units, and rational integers coprime
to seven reduce to constants, whose logarithm is zero.
\end{proof}

Since $h_K=1$, the quotient $A^\times/\overline{R^\times}$ is also
the ray class group of modulus $7R$. Its abstract structure is given
by Hoelscher~\cite[Theorem~1.4]{Hoelscher2010}; the explicit unit
image above is needed to realize the character values.

\subsubsection{Ray coordinates}

For $r=1,2,3$, let $\bar c_r(z)$ be the ray residue of $(\pi^rz,7)$,
with polynomial coordinates $\gamma_d$.
The prescribed sum fixes $\gamma_0$, so we retain the full ray residue
when converting the unit image to ray coordinates.

\begin{lemma}[Exact ray windows]\label{lem:ray-exact-window}
For $r=1,2,3$,
\[
 \bar c_r(z)=\bar c_r(z')
       \quad\Longleftrightarrow\quad z\equiv z'\pmod{\pp^{7-r}}.
\]
The resulting maps are affine bijections from $R/\pp^{7-r}$ onto,
respectively, the coordinate spaces
\[
 \{\gamma_6=0\},\qquad
 \{\gamma_6=\gamma_5=0\},\qquad
 \{\gamma_6=\gamma_5=\gamma_4=0\}.
\]
Local units correspond to the additional condition
$\gamma_{6-r}\ne0$.
\end{lemma}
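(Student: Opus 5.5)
The plan is to factor the map $z\mapsto\bar c_r(z)$ into two steps: first $z\mapsto f=\pi^rz$ modulo $7R$, then the ray residue of $(f,7)$. By Lemma~\ref{lem:cbar-window}, the ray residue of $(f,s)$ with $s=7$ fixed depends only on $f\bmod 49R$. More precisely, its nonconstant coordinates depend only on $f\bmod7R$, and $\gamma_0$ depends on $t\bmod7$, where $t=(7-\ell(f))/7$. Replacing $z$ by $z+\pi^{7-r}w$ changes $f$ by $\pi^7w\in7\pp$. The last assertion of Lemma~\ref{lem:cbar-window} then gives $\ell(f)\equiv\ell(f')\pmod{49}$, so $t$ is unchanged modulo $7$, and the nonconstant coordinates are unchanged as well. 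Hence $\bar c_r$ is well defined on $R/\pp^{7-r}$. I will also check that it is affine: $f\mapsto(a_0,\dots,a_5,0)+t(1,\dots,1)$ is additive up to the fixed translate coming from $s=7$, and reduction is linear.

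Next I will show that the image lies in the stated coordinate space and that the map is injective. Since $v_\pp(\pi^rz)\ge r$, Lemma~\ref{lem:digits} and the moment dictionary \eqref{eq:moment-dict} give the vanishing of $\gamma_6$, then also $\gamma_5$, then also $\gamma_4$, for $r=1,2,3$ respectively. This places the image in the claimed subspaces, each of size $7^{7-r}$, matching $|R/\pp^{7-r}|=7^{7-r}$.

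For injectivity, suppose $\bar c_r(z)=\bar c_r(z')$. Take the ray vectors $c,c'$ of $(\pi^rz,7)$ and $(\pi^rz',7)$. Their difference $c-c'$ has sum $0$ and reduces to $0$ modulo $7$, so $c-c'=7e$ with $e\in\Z^7$ and $\sum e=0$. Then
\[
 \pi^r(z-z')=7\sum_m e(m)\omega^m .
\]
Because $\sum e=0$, we have $\sum_m e(m)\omega^m\in\pp$, so the right side lies in $\pp^7$ and therefore $z-z'\in\pp^{7-r}$. This shows the map is injective, and the cardinality count then makes it a bijection.

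For the statement about local units, note that $z$ is a unit exactly when $v_\pp(\pi^rz)=r$. By Lemma~\ref{lem:digits}, this happens exactly when $7\nmid\mathcal M_r$. The dictionary \eqref{eq:moment-dict} gives this directly for $r=1$ ($\mathcal M_1\equiv-\gamma_5$) and for $r=2$ (where $\gamma_5=0$, so $\mathcal M_2\equiv-\gamma_4/2$). For $r=3$ I need one further entry, $\mathcal M_3\equiv c\,\gamma_3$ modulo $\gamma_4,\gamma_5,\gamma_6$ with $c\ne0$. I will compute it from the power sums $\sum_m\binom m3m^d$, using that $\sum_m m^k\equiv -1$ when $6\mid k>0$ and $0$ otherwise. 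I expect this is the main place where an explicit check is needed, though it is routine: only the $m^3\cdot m^3$ term contributes, giving $\mathcal M_3\equiv-\gamma_3/6=\gamma_3$.
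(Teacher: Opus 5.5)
Your proof is correct and follows essentially the same route as the paper. That route is the sum-zero difference vector giving $\pi^r(z-z')\in 7\pp=\pp^7$, translation by the ray vector of $(0,7)$, and Lemma~\ref{lem:digits} plus a dimension count; the only additions are that you use the last assertion of Lemma~\ref{lem:cbar-window} for the reverse implication and that your explicit check that $\mathcal M_3\equiv\gamma_3$ once $\gamma_4=\gamma_5=\gamma_6=0$ fills in the $r=3$ case of the unit criterion, which the paper leaves to ``the digit dictionary.''
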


\begin{proof}
Cyclotomic evaluation identifies integer ray vectors of sum zero
with $\pp$. Two normalized vectors have the same residue precisely
when their difference is seven times such a vector, equivalently
when their values differ by $7\pp=\pp^7$. Dividing by $\pi^r$
gives the exact window. Subtracting the vector of $(0,7)$ makes
the map $\F_7$-linear. Lemma~\ref{lem:digits} places the image in
the stated space, which has the same dimension $7-r$ as the domain.
Injectivity therefore gives bijectivity, and the digit dictionary
identifies local units by the first nonzero coordinate.
\end{proof}

To compute these coordinates, write uniquely in the relevant quotient
\[
 z=a\bigl(1+b_1\pi+\cdots+b_{6-r}\pi^{6-r}\bigr),\qquad
 a\in\F_7^\times,\quad b_i\in\F_7,
\]
and put $S_j=1+\sum_{i=1}^j b_i$.  The ray coordinates are
\begin{equation}\label{eq:ray-substitution}
\begin{array}{cccc}
\toprule
 &r=1&r=2&r=3\\
\midrule
 \gamma_0&1+aS_5&1+aS_4&1+aS_3\\
 \gamma_1&a(1+5b_1+3b_2+5b_3+b_4)
         &a(5+3b_1+5b_2+b_3)&a(3+5b_1+b_2)\\
 \gamma_2&a(1+b_2+4b_3)&a(b_1+4b_2)&a(1+4b_1)\\
 \gamma_3&a(1+b_1+6b_2)&a(1+6b_1)&6a\\
 \gamma_4&a(1+5b_1)&5a&0\\
 \gamma_5&a&0&0\\
 \gamma_6&0&0&0\\
\bottomrule
\end{array}.
\end{equation}
To obtain the table, expand $\pi^rz$ in the $\omega$-basis and apply
Lemma~\ref{lem:cbar-window}.
For the constant row, the coefficient
sums of $\pi,\ldots,\pi^5$ are zero, whereas
\[
 \pi^6=-7\omega+14\omega^2-21\omega^3+14\omega^4-7\omega^5
\]
has coefficient sum $-7$. Normalization to sum seven gives the
stated $\gamma_0$.

\begin{proposition}[The defects in ray coordinates]\label{prop:ray-defects}
Let $z\in R$ be coprime to $\pi$, with leading residue
$\lambda=-\gamma_5=1$ in $\bar c_1(z)$. Then
\begin{align}\label{eq:ray-defects}
 \dd(z)&=\gamma_2+\gamma_4^3+2\gamma_3\gamma_4,\\
 \ee(z)&=\gamma_0-1-3\gamma_4^5-5\gamma_3\gamma_4^3
          -4\gamma_1\gamma_4-3\dd(z)\gamma_4^2-2\dd(z)\gamma_3.
          \notag
\end{align}
\end{proposition}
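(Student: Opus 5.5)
The approach is to compute $c_3$ and $c_5$ of $\mathcal L(z)$ directly in terms of the digits $b_1,\dots,b_5$ of $z=a(1+b_1\pi+\cdots+b_5\pi^5)$. We then translate the digits into ray coordinates using the $r=1$ column of table \eqref{eq:ray-substitution}.

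First, the condition $\lambda=-\gamma_5=1$ means $\gamma_5=-1$. Since $\gamma_5=a$ in the $r=1$ column, we get $a=-1=6$. This fixes the constant residue, so $\mathcal L(z)=\log(1+X)$ with $X=\sum_i b_i\pi^i$.

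Second, we invert the table, triangularly, to express the digits through the ray coordinates:
\begin{align*}
 \gamma_4&=-(1+5b_1), & \gamma_3&=-(1+b_1+6b_2),\\
 \gamma_2&=-(1+b_2+4b_3), & \gamma_1&=-(1+5b_1+3b_2+5b_3+b_4),
\end{align*}
and $\gamma_0=1-S_5$. These solve successively for $b_1$, $b_2$, $b_3$, $b_4$, and then $b_5$ through $S_5$. The formula for $\ee$ involves $\gamma_0$ precisely because $b_5$ enters $c_5$ linearly.

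Third, we compute the logarithm coefficients. We rewrite $\log(1+X)$ in the $\pi$-basis as $\sum_j\ell_j\pi^j$, where each $\ell_j$ is a polynomial in $b_1,\dots,b_j$ read off from the truncated series. We then change basis from $\pi$ to $\Lambda$. Since $\pi=1-\exp\Lambda=-\Lambda-\Lambda^2/2-\cdots-\Lambda^5/120$, each $\pi^j$ expands as a finite series in $\Lambda$ with coefficients in $\F_7$. Collecting the $\Lambda^3$ and $\Lambda^5$ coefficients gives $c_3$ as a cubic polynomial in $b_1,b_2,b_3$, and $c_5$ as a polynomial in $b_1,\dots,b_5$. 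Then $\dd=4c_3$ and $\ee=c_5$.

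A useful reduction shortens this step. Write $\beta=\log(1+X)$ in the $\pi$-basis. Because unit factors lie in the kernel of $(\dd,\ee)$ (Theorem~\ref{thm:de-props}), we may first multiply $z$ by suitable powers of $\omega$, $1+\omega$ and $1+\omega+\omega^2$ to kill $c_1,c_2,c_4$. This does not change the defects, but it does change the ray coordinates, so it serves only as a consistency check rather than a derivation.

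Fourth, we substitute the inverted digits into $4c_3$ and verify the identity
\[
 4c_3=\gamma_2+\gamma_4^3+2\gamma_3\gamma_4
\]
in $\F_7[\gamma_2,\gamma_3,\gamma_4]$. Both sides are polynomials of degree at most $3$, so it suffices to compare coefficients, or to check enough points of $\F_7^3$. Since $\dd$ depends only on $z\bmod\pp^4$, exactly these three coordinates can appear, which is a good sanity check.

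For $\ee$ we do the same, writing $c_5$ in terms of $\gamma_0,\dots,\gamma_4$ and then regrouping the terms that assemble into $\dd$. The target form $-3\dd\gamma_4^2-2\dd\gamma_3$ suggests the route: compute $c_5-(\gamma_0-1)+4\gamma_1\gamma_4+\cdots$, then recognise the leftover $\gamma_2$-dependence as the stated multiples of $\dd(z)$.

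The main obstacle is the fifth-order bookkeeping: the $\pi\to\Lambda$ change of basis through degree five combined with the series for $\log(1+X)$, all reduced modulo $7$ with inverses of $2,\dots,5$. I would organise this by precomputing $\pi^j$ in the $\Lambda$-basis for $j\le5$ as a small matrix. With that matrix, $c_5$ becomes an explicit linear combination of the $\ell_j$.

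Finally, the identity is a polynomial identity of total degree at most $5$ over $\F_7$, and both sides depend only on $(\gamma_0,\dots,\gamma_4)$, i.e.\ on $z$ modulo $\pp^6$ with $a=6$. A confirming check is available: evaluate both sides on the $7^5$ residues, or on the basis cases $\exp(\Lambda^3)$ and $\exp(\Lambda^5)$ with known defects $(4,0)$ and $(0,1)$. Unit residues, which must give $(0,0)$ by Lemma~\ref{lem:unit-images}, pin down the affine part.
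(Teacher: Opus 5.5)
Your plan is correct and is essentially the paper's proof. Both expand $\log(z/a)$ in the $\pi$-digits $b_1,\dots,b_5$ and convert to the $\Lambda$-basis; the paper records $c_3=L_1+L_2-L_3$ and $c_5=-L_1+2L_2+4L_3+2L_4-L_5$. Both then verify the two polynomial identities after substituting the $r=1$ column of the coordinate table. The paper keeps $a$ general through $P_j=\gamma_j/a$ and sets $a=-1$ only at the end, whereas you fix $a=-1$ first; this is immaterial. One caution about your last paragraph: the spot checks on $\exp(\Lambda^3)$, $\exp(\Lambda^5)$ and unit residues do not prove the identities, because the right-hand sides are nonlinear in the ray coordinates. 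Only the full coefficient comparison, or the exhaustive check over all $7^5$ residues, gives a proof.
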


\begin{proof}
Write
$L_j=[T^j]\sum_{k=1}^5(-1)^{k+1}(b_1T+\cdots+b_5T^5)^k/k$,
so that $\log(z/a)=\sum_{j=1}^5L_j\pi^j$.
Substitution of $\pi=1-\exp\Lambda$ gives
\begin{equation}\label{eq:log-coordinate-conversion}
 c_3=L_1+L_2-L_3,\qquad
 c_5=-L_1+2L_2+4L_3+2L_4-L_5.
\end{equation}
The $\Lambda^3$ and $\Lambda^5$ coefficients of
$\pi,\ldots,\pi^5$ are $(1,1,-1,0,0)$ and $(-1,2,4,2,-1)$.
Let $P_j=\gamma_j/a$ for $j=1,2,3,4$, using the $r=1$ column of
the coordinate table \eqref{eq:ray-substitution}, and let
$D=4(L_1+L_2-L_3)$.
Substituting the expressions for $P_j$ gives the polynomial identities
\[
 -P_2-P_4^3+2P_3P_4=D,
\]
\begin{multline*}
 -S_5+3P_4^5-5P_3P_4^3-4P_1P_4-3DP_4^2+2DP_3\\
 =-L_1+2L_2+4L_3+2L_4-L_5.
\end{multline*}
Expanding $D$ gives
$D=b_1^3-2b_1^2-3b_1b_2-3b_1-3b_2+3b_3$; both identities
follow by collecting coefficients in these finite polynomials.
The normalization $\lambda=1$ gives $a=-1$ in the coordinate table
\eqref{eq:ray-substitution}.
Substituting this value proves the defect formulas \eqref{eq:ray-defects}.
\end{proof}

\begin{theorem}[Ray residues of unit multiples]\label{thm:twisted}
Fix $y\in R$ coprime to $\pi$, and put $d=\dd(y)$, $e=\ee(y)$.
As $u$ ranges over $R^\times$, the ray residues $\bar c_r(yu)$,
normalized to sum seven, have the following exact descriptions.
\begin{enumerate}
\item[(a)] On the slice $\lambda=1$, the residues $\bar c_1(yu)$
are precisely the solutions with $\gamma_6=0$, $\gamma_5=-1$ of
\begin{align*}
 \gamma_2&=-\gamma_4^3-2\gamma_3\gamma_4+d,\\
 \gamma_0&=1+3\gamma_4^5+5\gamma_3\gamma_4^3+4\gamma_1\gamma_4
                  +3d\gamma_4^2+2d\gamma_3+e.
\end{align*}
The coordinates $(\gamma_1,\gamma_3,\gamma_4)$ range freely over $\F_7^3$.
\item[(b)] The residues $\bar c_2(yu)$ are precisely the solutions with
$\gamma_6=\gamma_5=0$ and $\gamma_4\ne0$ of
\[
 \gamma_1=-\gamma_3^3\gamma_4^4
                -3\gamma_2\gamma_3\gamma_4^5+d\gamma_4.
\]
The coordinates $(\gamma_0,\gamma_2,\gamma_3,\gamma_4)$ range freely
over $\F_7^3\times\F_7^\times$.
\item[(c)] The residues $\bar c_3(yu)$ are precisely the solutions with
$\gamma_6=\gamma_5=\gamma_4=0$ and $\gamma_3\ne0$ of
\begin{equation}\label{eq:twisted-third}
 4\gamma_3^2(\gamma_0-1)+\gamma_1\gamma_2\gamma_3
              -\gamma_2^3-d\gamma_3^3=0.
\end{equation}
Equivalently,
\begin{equation}\label{eq:twisted-third-solved}
 \gamma_0=1+2d\gamma_3+2\gamma_2^3\gamma_3^{-2}
                              -2\gamma_1\gamma_2\gamma_3^{-1}.
\end{equation}
The coordinates $(\gamma_1,\gamma_2,\gamma_3)$ range freely over
$\F_7^2\times\F_7^\times$.  At every fixed
$(\gamma_2,\gamma_3)$ there are exactly seven residues.
\end{enumerate}
Thus the residue possibilities of $\pi^ryu$, for $r=1,2,3$, depend on
the ideal $(y)$ only through its defect pair; for $r=2,3$ only $d$ is
needed.
\end{theorem}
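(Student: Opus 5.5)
The proof combines three earlier results. The unit image in Lemma~\ref{lem:unit-images} and Theorem~\ref{thm:de-props}(d) say that the residues $yu\bmod7R$, for $u\in R^\times$, are exactly the elements of $A^\times$ with defect pair $(d,e)$. Lemma~\ref{lem:ray-exact-window} says that $z\mapsto\bar c_r(z)$ is an affine bijection from $R/\pp^{7-r}$ onto the stated coordinate space, with local units cut out by $\gamma_{6-r}\ne0$. The defects have conductors $4$ and $6$ by Theorem~\ref{thm:de-props}(a). Hence the set of ray residues $\bar c_r(yu)$ is the image under $\bar c_r$ of the set of local units modulo $\pp^{7-r}$ whose relevant defects agree with those of $y$. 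For $r=1$ the window is $\pp^6$, so both $d$ and $e$ are seen. For $r=2,3$ the window is $\pp^5$ or $\pp^4$, so only $d$ is seen: $\ee$ does not factor through these quotients, and every class modulo $\pp^{7-r}$ with the right $\dd$ lifts to one with any prescribed $\ee$. This already gives the final sentence of the theorem. It remains to express ``$\dd(z)=d$'' (and ``$\ee(z)=e$'' when $r=1$) in the coordinates $\gamma_i$.

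For (a), I apply Proposition~\ref{prop:ray-defects} directly. On the slice $\lambda=1$ we have $a=-1$, and the $r=1$ column of \eqref{eq:ray-substitution} shows that $(b_1,\dots,b_5)\mapsto(\gamma_4,\gamma_3,\gamma_2,\gamma_1,\gamma_0)$ is a triangular affine bijection. Solving \eqref{eq:ray-defects} for $\gamma_2$ and $\gamma_0$ gives the two displayed equations. The triangular structure shows that $(\gamma_1,\gamma_3,\gamma_4)$ are free coordinates, and the count $7^3$ matches the $7^3$ elements of $\exp(U)$ in Lemma~\ref{lem:unit-images}.

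For (b) and (c) there is no ready-made formula, so I will derive one by the same method as the proposition. Write $z=a(1+b_1\pi+\cdots)$ and use the $r=2$ (respectively $r=3$) column of \eqref{eq:ray-substitution}. Invert it: for $r=2$, $a=\gamma_4/5$, $b_1$ comes from $\gamma_3$, $b_2$ from $\gamma_2$, and $b_3$ from $\gamma_1$. Then substitute into the explicit expression $\dd=4(L_1+L_2-L_3)=b_1^3-2b_1^2-3b_1b_2-3b_1-3b_2+3b_3$ from the proof of Proposition~\ref{prop:ray-defects}. This yields a relation that is linear in $\gamma_1$, giving the stated formula once signs and the powers of $\gamma_4$ are tracked; I will use $\gamma_4^6=1$ to clear inverses. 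The coordinate $\gamma_0=1+aS_4$ involves $b_4$, which does not enter $\dd$, so it is free.

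For $r=3$, $a=\gamma_3/6$, and $b_1,b_2$ are read from $\gamma_2,\gamma_1$, while $\gamma_0=1+aS_3$ involves $b_3$. Substituting into the expression for $\dd$ gives a relation linear in $\gamma_0$; multiplying through by $\gamma_3^2$ gives \eqref{eq:twisted-third}. For fixed $(\gamma_2,\gamma_3)$, each of the seven values of $\gamma_1$ determines $\gamma_0$ uniquely, which gives the count of seven. The main obstacle is this polynomial bookkeeping, in particular confirming that the cubic terms collapse exactly to $\gamma_3^3\gamma_4^4$ and $\gamma_2^3$. I would check it by collecting coefficients, or by testing against the defect pairs $(4,0)$ and $(0,1)$ of $\exp(\Lambda^3)$ and $\exp(\Lambda^5)$ and the unit $1+\omega$.
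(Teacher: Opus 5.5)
Your proposal is correct and follows the paper's proof essentially step for step: the unit orbits are the fibres of $(\dd,\ee)$ modulo $\pp^6$ and of $\dd$ modulo $\pp^5,\pp^4$, Lemma~\ref{lem:ray-exact-window} carries them bijectively to ray residues, part (a) is Proposition~\ref{prop:ray-defects} at $a=-1$, and for (b), (c) substituting the $r=2,3$ columns of \eqref{eq:ray-substitution} into $D$ does give $D\gamma_4=\gamma_1+\gamma_3^3\gamma_4^4+3\gamma_2\gamma_3\gamma_4^5$ and $D\gamma_3^3=4\gamma_3^2(\gamma_0-1)+\gamma_1\gamma_2\gamma_3-\gamma_2^3$. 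One small correction: for $r=3$ the term $b_1^3$ produces $\gamma_3^{-3}$, so you clear denominators with $\gamma_3^3$ (equivalently, multiply the solved form \eqref{eq:twisted-third-solved} by $4\gamma_3^2$), not with $\gamma_3^2$.
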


\begin{proof}
We first derive the equations satisfied by the unit multiples.
The equations in (a) follow from Proposition~\ref{prop:ray-defects},
since $\dd(yu)=d$ and $\ee(yu)=e$.
For (b) and (c), use the corresponding columns of the coordinate
table \eqref{eq:ray-substitution} and the expression
$D=4(L_1+L_2-L_3)$ in the proof of that proposition.
Substitution, with $a^6=1$,
gives the identities
\begin{align*}
 \gamma_1+\gamma_3^3\gamma_4^4
                 +3\gamma_2\gamma_3\gamma_4^5&=D\gamma_4
                    &&(r=2),\\
 4\gamma_3^2(\gamma_0-1)+\gamma_1\gamma_2\gamma_3-\gamma_2^3
                 &=D\gamma_3^3&&(r=3).
\end{align*}
Since $D=\dd(yu)=d$, these are (b) and (c). Solving (c) uses
$4^{-1}=2$ in $\F_7$.

It remains to show that every permitted residue is attained.
By Lemma~\ref{lem:unit-images}, the unit orbits modulo $\pp^6$
are exactly the fibres of $(\dd,\ee)$. Modulo $\pp^5$ and $\pp^4$ they are the
fibres of $\dd$, because the quotient by the unit image has only
the coordinate $\Lambda^3$. Lemma~\ref{lem:ray-exact-window}
transports these fibres bijectively to ray residues at the exact
windows $\pp^6,\pp^5,\pp^4$, respectively. The displayed identities
specify precisely these defects, with $\lambda=1$ in (a) and
nonzero leading coordinate in (b), (c); thus every permitted tuple
is attained. Solving for
$\gamma_2,\gamma_0$ in (a), $\gamma_1$ in (b), and $\gamma_0$ in
(c) gives all stated free coordinates.
\end{proof}


\subsection{Four prime elements}

Four prime elements suffice for the examples in the introduction
and the proof of the minima.

\begin{lemma}\label{lem:small-prime-types}
The elements in the following table generate prime ideals of $R$.
\[
\begin{array}{@{}rlrrr@{}}
\toprule
 \ell&y_\ell&\Nm(y_\ell)&\dd(y_\ell)&\ee(y_\ell)\\\midrule
 2&1+\omega+\omega^3&8&3&0\\
 29&1+\omega-\omega^2&29&2&1\\
 43&1+2\omega&43&6&6\\
 379&1+2\omega+3\omega^2&379&0&4\\
\bottomrule
\end{array}
\]
Consequently $\delta(2)=3$, $\delta(29)=2$, $\delta(43)=1$,
and $379$ is visible with zero first defect.
\end{lemma}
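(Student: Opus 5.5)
The lemma asserts three things for each row: the norm, that the generator is prime, and its two defects. The approach is direct computation.

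\textbf{Norms.} For $y=\sum_i a_i\omega^i$, compute $\Nm(y)=\prod_{t=1}^6\sigma_t(y)$ as a resultant of the polynomial $\sum a_iX^i$ with $\Phi_7(X)$. For the linear element $1+2\omega$ this is immediate. Since $\Nm(1+2\omega)=\prod_t(1+2\omega^t)$, putting $\omega^t=-1/2$ gives
\[
 \Phi_7(-1/2)\cdot 2^6=\bigl((-1/2)^7-1\bigr)\big/\bigl(-1/2-1\bigr)\cdot 2^6=43.
\]
For the quadratic and cubic generators, I would compute the resultant by reducing powers of $\omega$ modulo $\Phi_7$, or by multiplying the three conjugate pairs to obtain an element of the real cubic subfield and then taking its cubic norm. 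The expected values are $8$, $29$ and $379$. Note that $29$, $43$ and $379$ are all $\equiv1\pmod7$, so they split completely. Also $\ord_7(2)=3$, so primes above $2$ have norm $8$.

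\textbf{Primality.} In each case the norm equals $\ell^{f_\ell}$, the norm of a single prime ideal above $\ell$. The ideal $(y_\ell)$ has exactly this norm, and every nontrivial ideal dividing $\ell$ has norm a positive power of $\ell^{f_\ell}$. Hence $(y_\ell)$ is prime. For $\ell=2$, norm $8=2^3$ with $f_2=3$ gives the same conclusion.

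\textbf{Defects.} Reduce $y$ modulo $7R$ and write it in the $\pi$-basis via $\omega=1-\pi$, truncating at $\pi^6$. Factor out the constant residue $a$, so $y/a=1+b_1\pi+\cdots+b_5\pi^5$, and compute $L_1,\dots,L_5$ from the finite logarithm. Then use the conversion \eqref{eq:log-coordinate-conversion}:
\[
 c_3=L_1+L_2-L_3,\qquad c_5=-L_1+2L_2+4L_3+2L_4-L_5,
\]
followed by $\dd=4c_3$ and $\ee=c_5$. For $\dd$ alone the closed form $D=b_1^3-2b_1^2-3b_1b_2-3b_1-3b_2+3b_3$ suffices.

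As a worked case, take $1+2\omega=3-2\pi$, which has $a=3$, $b_1=-2/3=4$, and $b_j=0$ for $j\ge2$. Then
\[
 D=64-32-12=20\equiv6,
\]
matching $\dd(y_{43})=6$. For $\ee$ one computes $L_j=(-1)^{j+1}b_1^j/j$, which gives $\ee=6$. The remaining rows are handled the same way.

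\textbf{Consequences.} $\delta(\ell)$ is the representative of $\{\pm\dd\}$ in $\{0,1,2,3\}$: $\dd=3$ gives $\delta(2)=3$, $\dd=2$ gives $\delta(29)=2$, and $\dd=6\equiv-1$ gives $\delta(43)=1$. For $379$, $\dd=0$ and $\ee=4\ne0$, so $379$ is visible with zero first defect.

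The main obstacle is bookkeeping in the $\ee$ computations, which need precision modulo $\pi^6$ with $\pi$-expansions of $\omega^2$ and $\omega^3$. As a cross-check, I would verify $\dd$ via the ray-coordinate formula of Proposition~\ref{prop:ray-defects}.
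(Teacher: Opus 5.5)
Your proposal is correct and follows the paper's proof. Norms are products of conjugates; your $2^6\Phi_7(-1/2)=43$ is the paper's $(1+2^7)/3$. Primality follows because each norm equals $\ell^{f_\ell}$, and the defects come from finite expansion in $A$. The only cosmetic difference is that you expand in the $\pi$-basis and convert via \eqref{eq:log-coordinate-conversion}, while the paper substitutes $\omega=\exp\Lambda$ and reads off $c_3,c_5$ directly; your recipe gives $(c_3,c_5)=(6,0),(4,1),(0,4)$ for $\ell=2,29,379$, matching the table.
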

\begin{proof}
The norms are the products of the six conjugates, using
$1+\omega+\cdots+\omega^6=0$; for instance
$\Nm(1+2\omega)=(1+2^7)/3=43$.
The last three norms are rational primes. Every prime ideal above $2$
has norm $2^3$, so the first norm also forces a prime ideal.
For the defect entries, substitute $\omega=\exp\Lambda$ in
$\F_7[\Lambda]/(\Lambda^6)$ and expand
$\log(y_\ell/y_\ell(1))$. The coefficient pairs
$(c_3,c_5)$ are respectively $(6,0),(4,1),(5,6),(0,4)$.
The defect definitions~\eqref{eq:intro-defects} give the last two columns.
\end{proof}

The least positive divisible value is attained by the indicator of
\[
 T=\{(0,0),(1,5),(2,0),(2,6),(3,4),(4,6),(6,5)\}.
\]
Its coordinate sum is seven, and the eight character norms, in the
order $\infty,0,1,\ldots,6$, are
$(7,7,7,49,7,301,7,7)$. These follow by multiplying the six
conjugates of each ray sum. The factorization~\eqref{eq:factorization} gives
$\Theta_G(\mathbf1_T)=43\cdot7^{10}$.


\section{Criteria for the exceptional profiles}\label{sec:criteria}

We now solve the interpolation conditions for prescribed cofactor
ideals. Normalize the special orbit to $\infty$ and the generic
leading residue to $\lambda=1$. For a profile $(1^7,e_\infty)$, let
$y_b=f_b/\pi$ and $y_\infty=f_\infty/\pi^{e_\infty}$, and prescribe
their cofactor ideals $\mathfrak a_r=(y_r)$, coprime to~$7$. Put
\[
 d_b=\dd(\mathfrak a_b),\quad e_b=\ee(\mathfrak a_b),\quad
 d_\infty=\dd(\mathfrak a_\infty).
\]
The nonconstant conditions involve the first-defect moments
$D_k$, while the constant condition also involves the second-defect
sum $E_0$:
\[
 D_k=\sum_{b\in\F_7}b^kd_b\ (0\le k\le3),\quad E_0=\sum_be_b.
\]
We solve the nonconstant interpolation conditions first and then
the constant condition $\sum_r\gamma_{r,0}=\kappa$, where
$\kappa=1-s/7\pmod7$ is the sum class in \eqref{eq:kappa}.
Fullness in Theorem~\ref{thm:twisted} ensures that unit multiples
realize every permitted coordinate choice.

\begin{theorem}[The profile $(1^7,2)$]\label{thm:crit10}
For prescribed cofactor ideals and each $\kappa\in\F_7$, the profile
$(1^7,2)$ with generic leading residue $1$ and coordinate-sum class
$\kappa$ is realizable if and only if
\[
 D_0=\sum_b\dd(\mathfrak a_b)=\pm1\quad\text{in }\F_7.
\]
When this holds, every sum $s\equiv7(1-\kappa)\pmod{49}$ is
realizable. There is no condition on the second defects or on
the special cofactor ideal.
\end{theorem}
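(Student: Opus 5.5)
The plan is to translate realizability into the degree conditions (K$_d$) of Lemma~\ref{lem:degree}, after substituting the exact unit-multiple descriptions of Theorem~\ref{thm:twisted}, and then to solve them one degree at a time, from $d=6$ down to $d=1$. After that only the constant condition \eqref{eq:kappa} remains. By Lemma~\ref{lem:interp}, the profile is realizable with prescribed cofactor ideals exactly when we can choose ray residues satisfying these conditions.

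\textbf{Admissible residues.} By fullness in Theorem~\ref{thm:twisted}, the admissible residues are the following.
\begin{itemize}
\item[$\circ$] For each generic direction $b$ we use part~(a) with $(d,e)=(d_b,e_b)$. Then $h_6(b)=0$ and $h_5(b)=-1$. The coordinates $h_4(b),h_3(b),h_1(b)$ are free, and $h_2(b)$ and $\gamma_0$ are determined.
\item[$\circ$] For the special direction $\infty$ we use part~(b) with $d=d_\infty$. Then $\gamma_{\infty,6}=\gamma_{\infty,5}=0$, with $\gamma_{\infty,4}\neq0$. The coordinates $\gamma_{\infty,0},\gamma_{\infty,2},\gamma_{\infty,3}$ are free, and $\gamma_{\infty,1}$ is determined.
\end{itemize}

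\textbf{Degrees $6$ down to $3$.}
\begin{itemize}
\item[$\circ$] Degrees $6$ and $5$ hold automatically. The functions $h_6=0$ and $h_5=-1$ are constant. Their moments $\sum_b b^k$ with $k<6$ vanish, so $\gamma_{\infty,6}=\gamma_{\infty,5}=0$ is exactly what (K$_6$), (K$_5$) require.
\item[$\circ$] For $d=4$, the function $h_4=x_4$ must be a polynomial of degree at most two. We write $q_i=[b^i]x_4$. Since $\sum_b b^{i+4}=-1$ exactly when $i=2$, we get $\gamma_{\infty,4}=-\sum_b x_4(b)b^4=q_2$. The constraint $\gamma_{\infty,4}\ne0$ therefore becomes $q_2\ne0$.
\item[$\circ$] For $d=3$, the function $h_3=x_3$ has degree at most three, and $\gamma_{\infty,3}=x_{33}$. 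This is permitted because $\gamma_{\infty,3}$ is free.
\end{itemize}

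\textbf{Degree $2$, the key step.} Here $h_2(b)=-x_4(b)^3-2x_3(b)x_4(b)+d_b$ must have degree at most four. This means its zeroth and first moments must vanish. By Lemma~\ref{lem:identity}, these two conditions read
\[
 q_2^3+D_0=0,\qquad 3q_1q_2^2+2x_{33}q_2+D_1=0.
\]
Since $q_2\ne0$, the value $q_2^3$ ranges exactly over $\{\pm1\}$, the cubes in $\F_7^\times$. So the first equation is solvable if and only if $D_0=\pm1$; this is the whole obstruction. Given a suitable $q_2$, the second equation is solved by choosing $x_{33}$, with $q_1$ arbitrary. The value $\gamma_{\infty,2}=-\sum_bh_2(b)b^2$ is then absorbed by the free special coordinate.

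\textbf{Degree $1$ and the constant condition.} For $d=1$, the values $h_1(b)$ are completely free. The special coordinate $\gamma_{\infty,1}$ has already been fixed by the formula in part~(b), which involves $d_\infty$. We need $h_1$ to have degree at most five and $-\sum_b h_1(b)b=\gamma_{\infty,1}$. This amounts to two linear conditions, $\sum_b h_1(b)=0$ and a prescribed value for $\sum_b b\,h_1(b)$, and both are clearly satisfiable. Finally, the generic $\gamma_0$'s are now determined, but $\gamma_{\infty,0}$ is free. So $\sum_r\gamma_0(\bar c^{(r)})=\kappa$ can be met for every $\kappa$.

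\textbf{Conclusion.} Lemma~\ref{lem:interp} then yields an integer vector $v$. The character values are nonzero, being generators of the prescribed ideals times $\pi$ or $\pi^2$. The profile is exactly $(1^7,2)$, because the leading generic coordinate $\lambda=1$ is nonzero and the special ray satisfies $\gamma_{\infty,4}\ne0$. By the remark after \eqref{eq:kappa}, every $s\equiv7(1-\kappa)\pmod{49}$ occurs. The constraints never involved $e_b$ or any condition on $d_\infty$, since $\gamma_{\infty,1}$ was absorbed. This gives the final assertion.

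The main obstacle is the degree-two step. It requires the correct specialization of Lemma~\ref{lem:identity}, including its signs, so that the first moment of $h_2$ reduces to $q_2^3+D_0$. I will check this sign directly against table~\eqref{eq:ray-substitution} before relying on it.
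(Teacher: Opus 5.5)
Your proposal is correct and follows essentially the same route as the paper. You impose (K$_6$)--(K$_3$) to get $\gamma_{\infty,4}=q_2\neq0$, sum the generic $\gamma_2$-law with Lemma~\ref{lem:identity} to get $q_2^3+D_0=0$, and then absorb $D_1$ into $x_{33}$, $d_\infty$ into $x_1$, and the $e_b$ into the free special constant $\gamma_{\infty,0}$; the paper makes the same moves, simply fixing $x_4=q_2b^2$, $x_3=x_{33}b^3$, $x_1=\gamma_{\infty,1}b^5$. One wording slip: in your closing remark it is the zeroth moment (the sum) of $h_2$, not the first, that reduces to $q_2^3+D_0$, as you correctly wrote in the degree-two step.
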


\begin{proof}
Write $x_j(b)=\gamma_j(\bar c_1(y_bu_b))$ for the generic data.
By (K$_3$) and (K$_4$) of Lemma~\ref{lem:degree}, $x_3,x_4$ are
polynomial functions of degrees at most $3,2$, respectively.
Their top coefficients are $\gamma_{\infty,3}=x_{33}$ and
$\gamma_{\infty,4}=q_2\ne0$, since the special valuation is two.
Summing the generic $\gamma_2$-law in Theorem~\ref{thm:twisted}(a)
and using (K$_2$) and Lemma~\ref{lem:identity} gives
\[
 0=\sum_b\gamma_2(b)=q_2^3+D_0.
\]
The nonzero cubes in $\F_7$ are $\pm1$, proving necessity.

Conversely, choose $q_2$ with $q_2^3=-D_0$ to satisfy the zeroth
moment condition in (K$_2$). By Lemma~\ref{lem:identity}, the first
moment also vanishes if we take
\[
 x_4(b)=q_2b^2,\qquad x_3(b)=x_{33}b^3,\qquad
 x_{33}=-(2q_2)^{-1}D_1.
\]
The remaining moments determine the special coordinates. Set
$\gamma_{\infty,2}=-\sum_b b^2\gamma_2(b)$,
$\gamma_{\infty,3}=x_{33}$ and $\gamma_{\infty,4}=q_2$, and choose
$\gamma_{\infty,1}$ by Theorem~\ref{thm:twisted}(b). To satisfy
(K$_1$), take $x_1(b)=\gamma_{\infty,1}b^5$; the required moments
follow from $\sum_b b^5=0$ and $\sum_b b^6=-1$.
Fullness in Theorem~\ref{thm:twisted}(a) supplies the generic unit
multiples and hence their constant coordinates. The special
constant is free, so take
$\gamma_{\infty,0}=\kappa-\sum_b\gamma_0(b)$ to satisfy the
constant condition. Fullness in part (b) supplies the special unit
multiple. Conditions (K$_5$) and (K$_6$) hold
because $\gamma_5(b)=-1$, $\gamma_{\infty,5}=0$, and all sixth
coordinates vanish. Lemma~\ref{lem:interp} therefore reconstructs
an integer vector for every stated sum, with determinant
$s\,7^9\prod_rN(\mathfrak a_r)$.
\end{proof}

\begin{theorem}[The second exceptional profile]\label{thm:crit11}
Fix the placed cofactor ideals, with generic leading residue $1$.
\begin{enumerate}
\item[(a)] The profile $(1^7,2)$ with coordinate sum of valuation
exactly two is realizable if and only if $D_0=\pm1$.
\item[(b)] The profile $(1^7,3)$ with coordinate-sum class
$\kappa\in\F_7$ is realizable if and only if $D_0=D_1=0$ and
\[
 \begin{cases}
 D_2\ne0, &\text{or}\\
 D_2=0,\ D_3+d_\infty\ne0,\ \kappa\ne1+E_0, &\text{or}\\
 D_2=0,\ D_3+d_\infty=0,\ \kappa=1+E_0.
 \end{cases}
\]
When feasible, every $s\equiv7(1-\kappa)\pmod{49}$ is realizable.
This profile has determinant valuation eleven for $\kappa\ne1$;
the case $\kappa=1$ gives the construction of $7^{12}\Z$ in
Proposition~\ref{prop:witness12}.
\end{enumerate}
\end{theorem}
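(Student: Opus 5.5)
Part (a) needs nothing new. The proof of Theorem~\ref{thm:crit10} never uses the valuation of $s$ beyond $7\mid s$. It uses only the ray residues, which depend on $s$ through $\kappa=1-s/7\bmod7$, and the constant condition \eqref{eq:kappa}. A coordinate sum of valuation exactly two corresponds to $\kappa=1$. Applying Theorem~\ref{thm:crit10} with $\kappa=1$ realizes every $s\equiv0\pmod{49}$, and in particular those with $v_7(s)=2$. The criterion $D_0=\pm1$ carries over verbatim.

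For (b) I would run the same degree-by-degree analysis of Lemma~\ref{lem:degree}. The generic data are still given by Theorem~\ref{thm:twisted}(a). The special ray is now $\bar c_3(y_\infty u_\infty)$, so Theorem~\ref{thm:twisted}(c) applies, with $\gamma_{\infty,4}=0$, $\gamma_{\infty,3}\ne0$, and the cubic relation \eqref{eq:twisted-third-solved}.

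First, (K$_4$) forces $x_4$ to have degree at most $2$ with top coefficient $q_2=\gamma_{\infty,4}=0$, so $x_4$ has degree at most $1$. Next, (K$_3$) gives $x_3$ of degree at most $3$ with $x_{33}=\gamma_{\infty,3}\ne0$. Summing the $\gamma_2$-law against $1$ and $b$ as in Lemma~\ref{lem:identity}, now with $q_2=0$, the zeroth and first moments of $-x_4^3-2x_3x_4$ vanish. Indeed, $x_4^3$ has degree at most $3$ and $x_3x_4$ has degree at most $4$, so multiplied by $b$ the degree is at most $5$ and the sums over $\F_7$ are zero. Hence (K$_2$) yields exactly $D_0=D_1=0$, which gives necessity of the first conditions.

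(K$_2$) also determines $\gamma_{\infty,2}=-\sum_b b^2\gamma_2(b)$. The polynomial $b^2(-x_4^3-2x_3x_4)$ has degree at most $6$, and its $b^6$ coefficient is $-2x_{33}q_1$ (with $q_1=[b]x_4$), so
\[
 \gamma_{\infty,2}=-D_2-2x_{33}q_1.
\]
Then (K$_1$) makes $x_1$ a polynomial of degree at most $5$ with free lower part, and fixes $\gamma_{\infty,1}=-\sum_b b\,x_1(b)$. This is free once the top coefficient of $x_1$ is chosen, because we can adjust $[b^5]x_1$.

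The main obstacle is the constant condition. We need
\[
 \sum_b\gamma_0(b)+\gamma_{\infty,0}=\kappa,
\]
where $\gamma_{\infty,0}$ is forced by \eqref{eq:twisted-third-solved}:
\[
 \gamma_{\infty,0}=1+2d_\infty\gamma_{\infty,3}+2\gamma_{\infty,2}^3\gamma_{\infty,3}^{-2}-2\gamma_{\infty,1}\gamma_{\infty,2}\gamma_{\infty,3}^{-1}.
\]
Each generic $\gamma_0(b)$ is given by Theorem~\ref{thm:twisted}(a); summing over $b$ contributes $7\cdot1+E_0$ plus moments of $x_1,x_3,x_4$. I would compute this total as a polynomial in the free parameters $x_{33}$, $q_0$, $q_1$, the lower coefficients of $x_3$, and $\gamma_{\infty,1}$, using $\sum_b b^k=-[k=6]$.

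If $D_2\ne0$, then choosing $q_1=0$ makes $\gamma_{\infty,2}\ne0$. The term $-2\gamma_{\infty,1}\gamma_{\infty,2}\gamma_{\infty,3}^{-1}$ is then linear in the free $\gamma_{\infty,1}$, which hits every $\kappa$. Some care is needed to check that $\gamma_{\infty,1}$ enters the generic sums only through $x_1$ terms that can be compensated.

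If $D_2=0$, I would take $q_1=0$ so that $\gamma_{\infty,2}=0$. I expect the surviving expression to be
\[
 1+E_0+c\,(D_3+d_\infty)\,x_{33},
\]
with some nonzero constant $c$. The $D_3$ term arises from the $2d\gamma_3$ summand, because the $b^3$ moment of $x_3$ pairs with the $d_b$. If $D_3+d_\infty\ne0$, varying the nonzero parameter $x_{33}$ reaches every class except $1+E_0$. If $D_3+d_\infty=0$, only $1+E_0$ is reached.

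Cases with $q_1\ne0$ must be shown to give nothing new. I would verify this by checking that the $q_1$-dependence cancels or reproduces the same set; this bookkeeping is where I expect the real work to lie.

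Finally, Lemma~\ref{lem:interp} reconstructs the vector, and every $s$ in the class is realizable as in \eqref{eq:kappa}. The valuation claim follows from Lemma~\ref{lem:pinning}: $\kappa\ne1$ means $v_7(s)=1$.
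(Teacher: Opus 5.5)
Your set-up follows the paper's proof, and several pieces are correct. Part (a) is right, as is the necessity of $D_0=D_1=0$ and the formula $\gamma_{\infty,2}=-D_2-2x_{33}q_1$. The sufficiency argument when $D_2\ne0$ also works: you solve with $\gamma_{\infty,1}$, whereas the paper uses the quadratic coefficient $c$ of $x_3$. With $q_1=0$ the generic sum does not involve $x_1$ at all, since $\sum_bx_1x_4=q_0\sum_bx_1+q_1B=q_1B$, so the ``care'' you flag is immediate.

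The gap is the ``only if'' direction when $D_2=0$, which is exactly what produces the second and third branches. You evaluate the constant condition only on the slice $q_1=0$, and even there only as an expectation. You then defer showing that other free data reach no further classes: $q_1\ne0$, together with $B=\sum_b bx_1(b)$, $q_0$ and the lower coefficients of $x_3$. Without that, you have shown only that one family of configurations realizes exactly the stated classes. You have not shown that no configuration realizes others, for instance $\kappa=1+E_0$ when $D_3+d_\infty\ne0$.

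The missing step is a direct computation for arbitrary free data. Put $x=x_{33}$, $q=q_1$, $c=[b^2]x_3$ and $D=D_2$. The power sums, with $D_0=D_1=0$ and $\sum_bx_1=0$, give
\[
 \sum_bx_4^5=0,\quad \sum_bx_3x_4^3=-xq^3,\quad \sum_bx_1x_4=qB,\quad \sum_bd_bx_4^2=q^2D,\quad \sum_bd_bx_3=cD+xD_3.
\]
Hence
\[
 \sum_b\gamma_0(b)=-5xq^3+4qB+3q^2D+2cD+2xD_3+E_0.
\]
Theorem~\ref{thm:twisted}(c) gives $\gamma_{\infty,0}=1+2d_\infty x+2r^3x^{-2}+2Brx^{-1}$ with $r=-2xq-D$. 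Adding the two, the $xq^3$, $qB$ and $q^2D$ terms cancel in $\F_7$, and the constant condition becomes
\[
 \kappa-1-E_0=2x(D_3+d_\infty)+2D\Bigl(c+\frac{qD}{x}-\frac{D^2}{x^2}-\frac Bx\Bigr).
\]
At $D=0$ this reads $\kappa-1-E_0=2x(D_3+d_\infty)$ for every choice of the free data, with $x\ne0$. This yields both directions of the two $D_2=0$ branches, and it confirms that your expected constant is $2$.
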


\begin{proof}
For (a), apply Theorem~\ref{thm:crit10} with $\kappa=1$ and
$s=49u$, $7\nmid u$.

For (b), we first derive the necessary constant condition without
restricting the free coordinates. The special valuation three
forces the quadratic coefficient of $x_4$ to vanish.
Write $x_4(b)=q_0+qb$, and let
$x\ne0$ and $c$ be the cubic and quadratic coefficients of $x_3$.
The two identities in Lemma~\ref{lem:identity} give $D_0=D_1=0$.
Write $D=D_2$ and let $B=\sum_b b x_1(b)$ be the first moment
of $x_1$. The remaining nonconstant conditions give
\[
 \gamma_{\infty,3}=x,\qquad
 r:=\gamma_{\infty,2}=-2xq-D,\qquad
 \gamma_{\infty,1}=-B,\qquad \sum_bx_1(b)=0.
\]
The generic constant law of Theorem~\ref{thm:twisted}(a) sums to
\begin{equation}\label{eq:generic-constant}
 \sum_b\gamma_0(b)
 =-5xq^3+4qB+3q^2D+2cD+2xD_3+E_0.
\end{equation}
Indeed, the contributing sums are
$\sum x_3x_4^3=-xq^3$, $\sum x_1x_4=qB$,
$\sum d_bx_4^2=q^2D$ and $\sum d_bx_3=cD+xD_3$;
$\sum x_4^5=0$. These follow from $D_0=D_1=0$ and the usual
power sums in $\F_7$.
The special law in Theorem~\ref{thm:twisted}(c) gives
\[
 \gamma_{\infty,0}
 =1+2d_\infty x+2r^3x^{-2}+2Brx^{-1}.
\]
Adding the two constant laws and substituting $r=-2xq-D$, we
rewrite the constant condition as
\begin{equation}\label{eq:uniform-eleven}
 \kappa-1-E_0
 =2x(D_3+d_\infty)
  +2D\left(c+\frac{qD}{x}-\frac{D^2}{x^2}-\frac{B}{x}\right).
\end{equation}
Here the only denominator is a power of $x\ne0$. Put
$C=D_3+d_\infty$ and $K=\kappa-1-E_0$, the remaining prescribed
constant. If $D=0$, then the constant condition
\eqref{eq:uniform-eleven} reduces to $K=2xC$, so $K$ and $C$
must vanish together. This gives the necessary alternatives.

For sufficiency, we can set $q=B=0$ and solve for the remaining
coefficients $x,c$. Choose the generic free coordinates as
\[
 x_4(b)=x_1(b)=0,\qquad x_3(b)=xb^3+cb^2,
 \qquad x\ne0.
\]
Then $\gamma_2(b)=d_b$ and
$\gamma_0(b)=1+2d_b(xb^3+cb^2)+e_b$. Set
\[
 \gamma_{\infty,3}=x,\quad \gamma_{\infty,2}=-D,\quad
 \gamma_{\infty,1}=0,\quad
 \gamma_{\infty,0}=1+2d_\infty x-2D^3x^{-2}.
\]
These special coordinates satisfy Theorem~\ref{thm:twisted}(c).
Conditions (K$_1$)--(K$_4$) follow from
$D_0=D_1=0$ and the chosen polynomial degrees; (K$_5$) and
(K$_6$) hold as in the preceding proof. The constant condition is
\begin{equation}\label{eq:explicit-eleven}
 K=2xC+2cD-2D^3x^{-2}.
\end{equation}
If $D\ne0$, then setting $x=1$ determines
$c=D^2+(K-2C)/(2D)$.
If $D=0$ and $C\ne0$, then take $x=K/(2C)$ and $c=0$;
feasibility ensures $x\ne0$. If $D=C=0$, then take $x=1,c=0$,
since $K=0$.
Fullness in Theorem~\ref{thm:twisted} supplies the unit multiples,
and Lemma~\ref{lem:interp} reconstructs a vector for every sum
in the stated class.
\end{proof}


\section{Proof of the classification}\label{sec:closed}

The profile criteria become conditions on rational prime factors
because ideal defects are additive. Since $h_K=1$, every ideal
coprime to $7$ has a generator, and Theorem~\ref{thm:de-props}
makes its defects independent of that generator.
The primes above a rational prime $\ell\ne7$ form one Galois
orbit and have norm $\ell^{f_\ell}$, with $f_\ell=\ord_7(\ell)$.
Conjugation multiplies their defects by $(\tfrac t7)$ and $t^{-1}$.
Thus the unsigned first defect $\delta(\ell)$ and invisibility,
defined in the introduction, are independent of the chosen prime
above $\ell$.

\begin{lemma}[Automatic vanishing, and signs]\label{lem:auto}
Let $\qq\mid\ell$, $\ell\ne7$.
\begin{enumerate}
\item[(a)] If $\ell\equiv3,5,6\pmod7$, then $\ell$ is invisible.
If $\ell\equiv2,4\pmod7$, then $\ee(\qq)=0$.
\item[(b)] If $\dd(\qq)\ne0$, both signs of $\dd(\qq)$ occur
among the primes above $\ell$. If $\ee(\qq)\ne0$, then
$\ell\equiv1\pmod7$, and its conjugates have all six nonzero
second defects.
\end{enumerate}
\end{lemma}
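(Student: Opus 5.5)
The plan is to combine the Galois equivariance of Theorem~\ref{thm:de-props}(c) with the structure of the decomposition group of $\qq$. Let $f=f_\ell$ and $H=\langle \ell\rangle\le\F_7^\times$, a subgroup of order $f$. The Frobenius $\sigma_\ell$ generates the decomposition group of $\qq$, so $\sigma_t\qq=\qq$ for every $t\in H$. Since the defects are well defined on ideals, part~(c) gives, for every $t\in H$,
\[
 \dd(\qq)=\Bigl(\frac t7\Bigr)\dd(\qq),\qquad \ee(\qq)=t^{-1}\ee(\qq).
\]

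Part~(a) follows from this fixed-point equation. If $H$ contains a quadratic nonresidue, the first equation forces $\dd(\qq)=0$. If $H$ contains any $t\ne1$, the second forces $\ee(\qq)=0$. For $\ell\equiv3,5\pmod7$ we have $f=6$, so $H=\F_7^\times$ contains $3$, a nonresidue. For $\ell\equiv6\pmod7$ we have $H=\{\pm1\}$, and $-1$ is a nonresidue because $7\equiv3\pmod4$. Hence both defects vanish, and $\ell$ is invisible. For $\ell\equiv2,4\pmod7$ we have $H=\{1,2,4\}$, the squares, so the first equation says nothing about $\dd(\qq)$. The second equation with $t=2$ still gives $\ee(\qq)=0$.

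For part~(b), suppose $\dd(\qq)\ne0$. Part~(a) then shows $H$ consists of squares, so $\ell\equiv1,2,4\pmod7$. The primes above $\ell$ are $\sigma_t\qq$ for $t\in\F_7^\times$, and choosing $t=3$ gives $\dd(\sigma_3\qq)=-\dd(\qq)$. Thus both signs occur. If instead $\ee(\qq)\ne0$, part~(a) forces $H=\{1\}$, that is, $\ell\equiv1\pmod7$. In that case there are six distinct primes $\sigma_t\qq$, and their second defects $t^{-1}\ee(\qq)$ run over all of $\F_7^\times$.

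The only point needing care, and so the main obstacle, is the claim that the decomposition group of $\qq$ is exactly $\langle\sigma_\ell\rangle$. This is the standard splitting law for unramified $\ell\ne7$ in the abelian extension $K/\Q$, the same fact that gives residue degree $f_\ell$ in Neukirch. One must also check that equivariance applies to the ideal: a generator $y$ of $\qq$ gives $\sigma_ty$ as a generator of $\sigma_t\qq$, so this follows from the ideal-level statement at the end of Theorem~\ref{thm:de-props}.
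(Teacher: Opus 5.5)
Your proposal is correct and follows the paper's own argument. The paper also uses that Frobenius $\sigma_\ell$ fixes $\qq$ to obtain the fixed-point equations, and it then conjugates by $\sigma_3$ (and by all $\sigma_t$ when $\ell\equiv1\pmod 7$) to get the statements in (b). One simplification: the paper applies the equations with $t=\ell$ itself. Since $(\tfrac{\ell}{7})=-1$ exactly when $\ell\equiv3,5,6\pmod7$, and $\ell^{-1}\ne1$ whenever $\ell\not\equiv1\pmod7$, this makes your case analysis over $H=\langle\ell\rangle$ unnecessary.
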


\begin{proof}
The Frobenius $\sigma_\ell$ fixes $\qq$. Hence
\[
 \dd(\qq)=(\tfrac\ell7)\dd(\qq),\qquad
 \ee(\qq)=\ell^{-1}\ee(\qq),
\]
which proves (a) and forces $\ell\equiv1\pmod7$ when the
second defect is nonzero. Conjugation by $\sigma_t$ gives
$(\tfrac t7)\dd(\qq)$ and $t^{-1}\ee(\qq)$. For
$\ell\equiv1$, all six $t$ give distinct primes; for
$\ell\equiv2,4$, the decomposition group is $\{1,2,4\}$ and
the representatives $1,3$ give opposite signs.
\end{proof}

\begin{proof}[Proof of Theorem~\ref{thm:main}]
Theorem~\ref{thm:coprime}, Theorem~\ref{thm:noncoprime} and
Lemma~\ref{lem:pinning} leave only levels ten and eleven outside
$A_7\cup7^{12}\Z$; Proposition~\ref{prop:witness12} below proves
$7^{12}\Z\subseteq S(\GG)$.

\emph{Level ten.} For a value $7^{10}m$, normalization conjugates
the cofactor ideals and gives
\[
 m=\sigma_0\prod_rN(\mathfrak a_r),\qquad \sigma_0=s/7,
 \qquad \sum_b\dd(\mathfrak a_b)=\pm1,
\]
by Lemma~\ref{lem:pinning} and Theorem~\ref{thm:crit10}.
Suppose that $k_\ell$ prime ideals above $\ell$, counted with
multiplicity, occur in the generic cofactor ideals. Their norms
use $k_\ell$ factors $\ell^{f_\ell}$ from $|m|$, and their defects
contribute
$j_\ell\delta(\ell)$, where $|j_\ell|\le k_\ell$ by
Lemma~\ref{lem:auto}. Since $f_\ell k_\ell\le a_\ell$, the
coefficient bounds in (i) follow.

Conversely, take coefficients as in (i), setting $j_\ell=0$
when $\delta(\ell)=0$. Choose $|j_\ell|$ prime ideals above
each remaining $\ell$, allowing repetition, with sign
$\operatorname{sgn}(j_\ell)\delta(\ell)$. Place their product
$\mathfrak a$ on the generic orbit $b=0$, and use trivial ideals
on the other orbits. Then
$D_0=\sum_\ell j_\ell\delta(\ell)=\pm1$ and
$N(\mathfrak a)\mid m$. With
$\sigma_0=m/N(\mathfrak a)$ and $\kappa=1-\sigma_0$,
Theorem~\ref{thm:crit10} realizes $s=7\sigma_0$ and determinant
$7^{10}m$. Unused prime factors go into $\sigma_0$, which
explains why there is no parity restriction on $j_\ell$.

\emph{Level eleven.} Profile $(1^7,2)$ with $v_7(s)=2$ gives
exactly $L_{10}$ by the same argument, using $s/49$ instead of
$s/7$ and Theorem~\ref{thm:crit11}(a). For profile $(1^7,3)$, a
usable visible prime can supply either of the two defects.
Suppose that such a prime $\ell$ divides $m$, choose $\qq\mid\ell$,
and put
\[
 \sigma_0=m/\ell^{f_\ell},\qquad s=7\sigma_0,\qquad
 \kappa=1-\sigma_0\ne1.
\]
If $\dd(\qq)\ne0$, then put $\qq$ on the special orbit and use
trivial ideals elsewhere. Then all $D_k$ and $E_0$ vanish,
whereas $d_\infty\ne0$, so the second branch of
Theorem~\ref{thm:crit11}(b) applies. If $\dd(\qq)=0$ and
$\ee(\qq)\ne0$, then choose a conjugate with second defect
$\kappa-1$ by Lemma~\ref{lem:auto}(b), place it on one generic
orbit, and use trivial ideals elsewhere. The third branch now
applies, since $D_k=d_\infty=0$ and $E_0=\kappa-1$.
Both constructions have determinant
$7\sigma_0\,7^{10}\ell^{f_\ell}=7^{11}m$.

Conversely, every prime ideal in a cofactor has norm dividing
$|m|$, so it lies over a usable rational prime. If all usable
primes are invisible, then additivity makes every defect zero.
Theorem~\ref{thm:crit11}(b) then requires $\kappa=1$, contrary
to valuation eleven; profile $(1^7,2)$ is also impossible
because $D_0=0$. This proves (ii). Condition (i) requires a
usable prime with nonzero first defect, giving
$L_{10}\subseteq L_{11}$. Finally $f_2=3$ and $\delta(2)=3$
(Lemma~\ref{lem:small-prime-types}), so
$8\in L_{11}\setminus L_{10}$ and the inclusion is strict.
All norm factors are positive, and the augmentation can have
either sign. Thus both cofactor sets are symmetric about zero.
\end{proof}

\begin{proposition}[A bounded-count criterion]\label{prop:blocks}
For $m=\pm\prod\ell^{a_\ell}$ coprime to $7$, put
\[
 B_r=\sum_{\delta(\ell)=r}\left\lfloor a_\ell/f_\ell\right\rfloor
 \qquad(r=1,2,3).
\]
Then $m\in L_{10}$ if and only if at least one of
\[
 B_1\ge1,\qquad B_3\ge2,\qquad B_2\ge3,\qquad B_2B_3\ge1
\]
holds.
\end{proposition}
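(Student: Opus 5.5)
The plan is to reduce Theorem~\ref{thm:main}(i) to a statement about sums in $\F_7$. Primes with $\delta(\ell)=0$ contribute nothing to $\sum_\ell j_\ell\delta(\ell)$, so only the three classes $\delta(\ell)\in\{1,2,3\}$ matter. Within one class, the contributions of all primes $\ell$ with $\delta(\ell)=r$ combine into $J_r r$, where $J_r=\sum_{\delta(\ell)=r}j_\ell$. As the $j_\ell$ range over $|j_\ell|\le\lfloor a_\ell/f_\ell\rfloor$, the integer $J_r$ takes exactly the values in $[-B_r,B_r]$. The lower bound is the sum of the individual lower bounds, and every intermediate value is reached by changing one $j_\ell$ at a time. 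Hence $m\in L_{10}$ if and only if there are integers $J_r$ with $|J_r|\le B_r$ and
\[
 J_1+2J_2+3J_3\equiv\pm1\pmod7 .
\]
The problem therefore depends only on the triple $(B_1,B_2,B_3)$. Moreover, enlarging any $B_r$ only enlarges the set of achievable sums, so the condition is monotone in each coordinate.

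Sufficiency is by explicit witnesses for each of the four conditions:
\begin{itemize}
\item if $B_1\ge1$, take $J_1=1$;
\item if $B_3\ge2$, take $J_3=2$, since $6\equiv-1\pmod7$;
\item if $B_2\ge3$, take $J_2=3$, since $6\equiv-1\pmod7$;
\item if $B_2,B_3\ge1$, take $J_2=J_3=1$, since $5\equiv-2$ fails but $2-3=-1$ works, so take $J_2=1$ and $J_3=-1$.
\end{itemize}
In every case the remaining $J_r$ are set to zero.

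Necessity is the step that needs care. Suppose all four conditions fail. Then $B_1=0$ and $B_2B_3=0$, and either $B_3\le1$ with $B_2=0$, or $B_3=0$ with $B_2\le2$. By monotonicity it suffices to treat the two maximal triples.
\begin{itemize}
\item $(0,0,1)$: the achievable sums are $\{0,\pm3\}$, which avoid $\pm1$.
\item $(0,2,0)$: the achievable sums are $\{0,\pm2,\pm4\}$, and $\pm4\equiv\mp3$, so this set is $\{0,\pm2,\pm3\}$, which also avoids $\pm1$.
\end{itemize}
So none of the failing triples reaches $\pm1$.

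The main obstacle is keeping this case split complete, and in particular making sure that "fail all four" really reduces to those two maximal triples. I will do this by listing the negation of the four conditions explicitly before appealing to monotonicity. The earlier results are used only through Theorem~\ref{thm:main}(i) and the fact that $\delta(\ell)\in\{0,1,2,3\}$.
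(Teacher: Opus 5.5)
Your proof is correct and follows the paper's argument. You collapse each defect class to a single $J_r\in[-B_r,B_r]$, give the witnesses $1$, $3+3$, $2+2+2$ and $2-3$, and check that the only failing configurations, $(0,0,1)$ and $(0,2,0)$ up to monotonicity, reach only $\{0,\pm2,\pm3\}$. One cleanup: your fourth sufficiency bullet first proposes $J_2=J_3=1$ and then corrects itself, so state the witness $J_2=1$, $J_3=-1$ directly.
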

\begin{proof}
For each defect type $r$, the sum of the allowable coefficient
intervals is
$[-B_r,B_r]\cap\Z$. The four alternatives give signed sums
$1$, $3+3=-1$, $2+2+2=-1$, and $3-2=1$ in $\F_7$.
If none holds, then there are no type-$1$ blocks and at most two
type-$2$ blocks or one type-$3$ block, never both types.
None of those signed sums is $\pm1$.
\end{proof}

\begin{proof}[Proof of Corollary~\ref{cor:minima}]
The three smallest prime-ideal norms away from seven are
$8$, $29$, $43$, with unsigned defects $3$, $2$, $1$,
respectively, by Lemma~\ref{lem:small-prime-types}.
Indeed, the first degree-one primes are $29,43$; the least
degree-three norm is $2^3=8$, and the next exceeds $43$;
the least degree-two and degree-six norms are $13^2$ and $3^6$.
Below $43$, one can use at most one norm-$8$ or one norm-$29$
block, and neither satisfies the level-ten criterion. A
norm-$43$ block does, whereas the least visible norm is $8$.
Thus the least positive cofactors in $L_{10}$ and $L_{11}$ are
$43$ and $8$. Since
\[
 43\cdot7^{10}<7^{12}<8\cdot7^{11},
\]
the first is also the least positive seven-divisible value.
The cofactor $1$ has no usable prime, giving the exclusions
$\pm7^{10},\pm7^{11}\notin S(\GG)$.
\end{proof}

\begin{proposition}\label{prop:witness12}
For every $n\in\Z$ there is a vector $v\in\Z^{49}$ with
$s(v)=49n$ and orbit norms $(343,7,7,7,7,7,7,7)$. Consequently,
$7^{12}\Z\subseteq S(\GG)$.
\end{proposition}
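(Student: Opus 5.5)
The plan is to apply Theorem~\ref{thm:crit11}(b) with $\kappa=1$ and all eight cofactor ideals trivial, i.e.\ $\mathfrak a_r=R$ for every $r$. Then every $d_b$, $e_b$ and $d_\infty$ vanishes, so $D_0=D_1=D_2=D_3=0$, $E_0=0$ and $C=D_3+d_\infty=0$. With $\kappa=1$ we get $K=\kappa-1-E_0=0$, so the third branch ($D_2=0$, $C=0$, $\kappa=1+E_0$) applies. Although the theorem states the valuation-eleven interpretation only for $\kappa\ne1$, its feasibility statement covers every $\kappa$. Its sufficiency proof with $x=1$, $c=0$, $q=B=0$ gives explicit generic coordinates $x_4=x_1=0$, $x_3(b)=b^3$, $\gamma_2(b)=0$, $\gamma_0(b)=1$. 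The special coordinates are $\gamma_{\infty,3}=1$, $\gamma_{\infty,2}=\gamma_{\infty,1}=0$ and $\gamma_{\infty,0}=1$. The constant condition then reads $\sum_r\gamma_0=8\equiv1=\kappa$, which checks directly.

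Next I would invoke fullness in Theorem~\ref{thm:twisted}(a),(c) with $y=1$. It produces units $u_b$ and $u_\infty$ whose ray residues $\bar c_1(u_b)$ and $\bar c_3(u_\infty)$ are exactly these tuples. So the generic values are $f_b=\pi u_b$, up to the normalizing Galois twist, and the special value is $f_\infty=\pi^3u_\infty$. Lemma~\ref{lem:interp}(ii) then gives an integral vector $v$ for every sum $s\equiv7(1-\kappa)=0\pmod{49}$, that is, for every $s=49n$. This uses the remark after \eqref{eq:kappa} that feasible configurations realize the full residue progression.

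For the norms, each generic orbit has $N_b=\Nm(\pi u_b)=7$ and the special orbit has $N_\infty=\Nm(\pi^3u_\infty)=7^3=343$, because units have norm one. The factorization \eqref{eq:factorization} therefore gives $\Theta_\GG(v)=49n\cdot343\cdot7^7=7^{12}n$. Letting $n$ range over $\Z$, including $n=0$, which is trivially fine, gives $7^{12}\Z\subseteq S(\GG)$.

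The main point needing care is the one the theorem statement sidesteps: checking that $\kappa=1$, meaning $49\mid s$, is genuinely allowed in the reconstruction. The proof of Theorem~\ref{thm:crit11}(b) never used $\kappa\ne1$, since the restriction there only concerned the determinant valuation, so the argument above should go through. I would confirm this by rechecking conditions (K$_1$)--(K$_6$) for the explicit coordinates. As a sanity check, I would also try to exhibit a concrete $v$ with $n=1$ by direct computation.
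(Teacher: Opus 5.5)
Your proposal is correct and follows the paper's proof. Like the paper, you take every cofactor ideal to be $R$, apply the third branch of Theorem~\ref{thm:crit11}(b) with $\kappa=1$, and read off the orbit norms $(343,7,\dots,7)$ and $\Theta_\GG(v)=49n\cdot343\cdot7^7=7^{12}n$; your explicit coordinates ($x=1$, $c=q=B=0$) and the check $\sum_r\gamma_0=8\equiv1$ spell out what the paper's one-line appeal to that theorem leaves implicit.
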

\begin{proof}
Take every cofactor ideal to be $R$. All defects vanish, so
Theorem~\ref{thm:crit11}(b), with $\kappa=1$, realizes profile
$(1^7,3)$ with every prescribed sum $s\equiv0\pmod{49}$.
Every cofactor is a global unit and has norm one; hence the
orbit norms are as stated. Taking $s=49n$ gives
\[
 \Theta_\GG(v)=49n\cdot343\cdot7^7=7^{12}n,
\]
including $n=0$.
\end{proof}


\begin{proof}[Proof of Corollary~\ref{cor:sensitivity}]
Fix a positive integer $N$, and put $N_0=N/7^{v_7(N)}$.
By Theorem~\ref{thm:de-props}, choose $z\in R$ coprime to $7$
with $\dd(z)\ne0$. The ideals $7R$ and $N_0R$ are comaximal,
so the Chinese remainder theorem gives $\alpha\in R$ with
\[
 \alpha\equiv z\pmod{7R},\qquad
 \alpha\equiv1\pmod{N_0R}.
\]
Thus $\alpha$ is coprime to $7N$ and $\dd(\alpha)\ne0$.
Additivity of the defects on the prime-ideal factorization of
$(\alpha)$ gives a prime ideal $\qq\mid(\alpha)$ with
$\dd(\qq)\ne0$. The rational prime $\ell$ below $\qq$ therefore
satisfies $\ell\nmid7N$ and $\delta(\ell)\ne0$.

Let $\varphi$ denote Euler's totient function, with $\varphi(1)=1$,
and choose a positive multiple $a$ of $\varphi(N)$ with
$a\ge3f_\ell$. Then $\ell^a\equiv1\pmod N$ and
$\lfloor a/f_\ell\rfloor\ge3$. Each nonzero defect type therefore
satisfies the bounded-count criterion of Proposition~\ref{prop:blocks},
so $\ell^a\in L_{10}\subseteq L_{11}$. Since $1\notin L_{11}$,
the two congruent integers $1$ and $\ell^a$ have different membership
in both cofactor sets. This proves the result for every $N$.
\end{proof}


\section{The scope of the method}\label{sec:larger-primes}

Character factorization and integral reconstruction are available for
$C_p^2$ at every odd prime $p$. The cofactor criteria, however, depend
on two features of the local arithmetic at seven. Both change
already at $p=11$.

First, put $R_p=\Z[\zeta_p]$ and
$A_p=R_p/pR_p=\F_p[\Lambda]/(\Lambda^{p-1})$, with
$\Lambda=\log\zeta_p$. For $z\in A_p^\times$ with constant residue
$a$, write
\[
 \log(z/a)=\sum_{j=1}^{p-2}c_j(z)\Lambda^j.
\]
For a global unit $u$, every conjugate of the algebraic integer
$u/\overline u$ has absolute value one. This quotient is therefore a
root of unity. Since it is congruent to one modulo $1-\zeta_p$, it
is a power of $\zeta_p$. Comparing odd coefficients in its finite
logarithm gives $c_j(u)=0$ for every odd $j$ with $3\le j\le p-2$.
At $p=11$, four independent coordinates
$c_3,c_5,c_7,c_9$ vanish on global units, compared with the two
coordinates $c_3,c_5$ used here. Their simultaneous compatibility
across the character directions requires a new realization argument.

Second, $c_3(\sigma_tz)=t^3c_3(z)$. The cubes in $\F_7^\times$ are
exactly $\{\pm1\}$, which gives both the signed prime-ideal contributions
and the target $\pm1$ in the level-ten criterion. In $\F_{11}^\times$
the cube map is a bijection, since $\gcd(3,10)=1$. The same signed
counting criterion therefore cannot be transferred by replacing
seven with eleven.

These difficulties persist even when every ideal is principal.
For example, $\Q(\zeta_{11})$ has class number one: its Minkowski
bound is less than $59$, and the only prime-ideal norms below the
bound are $11$ and $23$. The corresponding ideals are generated by
$1-\zeta_{11}$ and the conjugates of $1+\zeta_{11}+\zeta_{11}^3$,
whose norms are $11$ and $23$, respectively.
For larger primes, the argument must also determine the full image
of the global units and justify the required principality conditions.
Likewise, the construction of $7^{12}\Z$ uses the valuation-three
ray identity; it does not supply an analogous divisibility threshold
for larger primes.

The shift criterion used for order $25$ remains valid
\cite[Proposition~2.2]{Panraksa2026}, but the required seed values
do not exist at order $49$, since $\pm7^{10}\notin S(C_7^2)$.
A classification for $p\ge11$ therefore needs further arguments.
The obstacles above concern this method; they do not rule out a
classification by other methods.


\section*{Statements and Declarations}
No funding was received for this research. The author declares no
competing interests.

\paragraph{Data availability.}
Exact checks of the displayed prime elements and the finite local
identities are supplied in the ancillary files (Online Resource~1). The classification and
all necessary and sufficient conditions are proved in the article.

\paragraph{Use of generative AI.}
Claude assisted earlier drafting and computational exploration;
OpenAI Codex assisted revision, algebraic calculations, code development,
and verification. The author is responsible for the mathematical
content and accompanying programs.


\providecommand{\bysame}{\leavevmode\hbox to3em{\hrulefill}\thinspace}
\providecommand{\MR}{\relax\ifhmode\unskip\space\fi MR }
\providecommand{\MRhref}[2]{%
  \href{http://www.ams.org/mathscinet-getitem?mr=#1}{#2}
}
\providecommand{\href}[2]{#2}

\end{document}